\DeclareSymbolFont{cyrletters}{OT2}{wncyr}{m}{n}
\DeclareMathSymbol{\Sha}{\mathalpha}{cyrletters}{"58}
\DeclareMathSymbol{\Zhe}{\mathalpha}{cyrletters}{"11}
\newtheoremstyle{uprightstyle}
  {3pt}
  {3pt}
  {\normalfont}
  {}
  {\bfseries}
  {.}
  {5pt plus 1pt minus 1pt}
  {}
\theoremstyle{remark}
\newtheorem*{unnumberedquestion}{Question}
\theoremstyle{plain} 
\title{Behaviors of the Tate--Shafarevich Group of Elliptic Curves under Quadratic Field Extensions}
\address{\parbox{\linewidth}{Mathematical Institute, Graduate School of Science, \\
Tohoku University, 6-3 Aramakiaza, Aoba, Sendai, Miyagi 980-8578, Japan.}}
\email{otheiio323.com@gmail.com}
\keywords{
  Elliptic curve,
  Tate--Shafarevich group,
  local global principle}
\title{Behaviors of the Tate--Shafarevich Group of Elliptic Curves under Quadratic Field Extensions}
\author{Asuka Shiga}
\keywords{Tate--Shafarevich group, elliptic curve}
\address{Mathematical Institute, Graduate School of Science, \\
Tohoku University, 6-3 Aramakiaza, Aoba, Sendai, Miyagi 980-8578, Japan.}
\email{otheiio323.com@gmail.com}
\DeclareSymbolFont{cyrletters}{OT2}{wncyr}{m}{n}
\DeclareMathSymbol{\Sha}{\mathalpha}{cyrletters}{"58}
\DeclareMathSymbol{\Zhe}{\mathalpha}{cyrletters}{"11}
\newtheorem{theorem}{Theorem}[section]
\newtheorem{definition}[theorem]{Definition}
\newtheorem{lemma}[theorem]{Lemma}
\newtheorem{cor}[theorem]{Corollary}
\newtheorem{remark}[theorem]{Remark}
\newtheorem{example}[theorem]{Example}
\newtheorem{proposition}[theorem]{Proposition}
\begin{document}

\title{Behaviors of the Tate--Shafarevich group of elliptic curves under quadratic field extensions}

\begin{abstract}

Let $E/\mathbb{Q}$ be an elliptic curve. We study the behavior of the Tate--Shafarevich group of $E$ under quadratic extensions $\mathbb{Q}(\sqrt{D})/\mathbb{Q}$. By analyzing the cokernel of the restriction map, without assuming the finiteness of the Tate--Shafarevich group, we prove that the ratio $\frac{\#\Sha(E/\mathbb{Q}(\sqrt{D}))[4]}{\#\Sha(E_D/\mathbb{Q})[2]}$ and $\#\Sha(E_D/\mathbb{Q})[2]$ can, under some conditions on $E/\mathbb{Q}$, grow arbitrarily large simultaneously, where $E_D$ denotes the quadratic twist of $E$ by $D$. For elliptic curves of the form $E : y^2 = x^3 + px$ with $p\equiv 1 \bmod 4$ being an odd prime, assuming the finiteness of the relevant Tate--Shafarevich groups, we prove that $\#\Sha(E/\mathbb{Q}(\sqrt{D}))[2] \leq 4$ and $\Sha(E_D/\mathbb{Q})[2] = 0$ for infinitely many square-free integers $D$ with $-D$ being a prime number.  Additionally, $\Sha(E/\mathbb{Q}(\sqrt{-D}))[2]\neq 0$ for all $D$ when $p=257$.
\end{abstract}

\maketitle

\tableofcontents

\section{Introduction}
Let $K$ be a number field and $M_K$ be the set of places of $K$. Let $E$ be an elliptic curve over $K$. The Tate--Shafarevich group of $E/K$ is defined as follows:

\[\begin{array}{ccc}
\Sha(E/K) \stackrel{\mathrm{def}}{=} \mathrm{Ker}\bigg(H^1(G_K,E) & \stackrel{\bigoplus_{v}\mathrm{res}_v}{\longrightarrow} & \bigoplus_{v\in M_K} H^1(G_{K_v},E)\bigg)\\

\hspace{2.7cm}\rotatebox{90}{$\in$} & &\rotatebox{90}{$\in$} \\

\hspace{2.7cm} \left[f\right] & \longmapsto & (\left[ f\mid_{G_{K_v}}\right])_v

\end{array}\]  where $K_v$ is the completion of $K$ at the place $v$, and $G_K, G_{K_v}$ are the absolute Galois groups of $K, K_v$ respectively and $\mathrm{res}_v : H^1(G_K,E)\to H^1(G_{K_v},E)$ is the restriction map of Galois cohomology. The Tate--Shafarevich group lives in the global Galois cohomology $H^1(G_{K},E)$, which is isomorphic to the collection of equivalence classes of torsors, often denoted by $\mathrm{WC}(E/K)$, which is called the Weil--Ch\^{a}telet group. Note that $\bigoplus_{v}\mathrm{res}_v$ is well defined. Indeed, if a torsor $C/K$ has good reduction at $v$, then its image in $H^1(G_{K_v}, E)$ vanishes since a genus $1$ curve over a finite field always has a rational point.

The Tate--Shafarevich group is a group that serves as an obstruction to the local-global principle for curves of genus $1$. It is conjectured to be finite (Tate--Shafarevich conjecture). 
By contrast, the $n$-torsion subgroup of $\Sha(E/K)$, that is, 
$\Sha(E/K)[n]\stackrel{\mathrm{def}}{=}\{a\in \Sha(E/K)\mid na=0\}=\mathrm{Ker} \left(H^1(G_K, E)[n] \stackrel{\mathrm{res}}{\to} \bigoplus_{v\in M_K} H^1(G_{K_v}, E)[n]\right)$ is known to be finite. 

 Let $K(\sqrt{D})/K$ be a quadratic extension. In this paper, we investigate the following question.

 \begin{unnumberedquestion} What are the behaviors of $\#\Sha(E/K(\sqrt{D}))[2]$ as a function of $D$? And how do they relate to the behaviors of $\#\Sha(E_D/K)[2]$? Here, $E_D/K$ is the quadratic twist of $E/K$ by $D$. \end{unnumberedquestion}

In other words, this question examines the increase or decrease of counterexamples to the local-global principle under field extensions. When a torsor $[C/K] \in \Sha(E/K)$ acquires an $K(\sqrt{D})$-rational point through the field extension $K(\sqrt{D})/K$, the image of $[C/K]$ in $\Sha(E/K(\sqrt{D}))$ becomes $0$. However, the number of counterexamples to the local-global principle may increase over $K(\sqrt{D})$, making the behavior of $\#\Sha(E/K(\sqrt{D}))$ relative to $\#\Sha(E/K)$ intricate.

First, we review the literature concerning the increasing direction. Rohrlich proved that $\Sha(E_D/\mathbb{Q})[2]$ can become arbitrarily large as $D$ varies \cite{H}. Clark and Sharif proved that $\Sha(E/L)[n]$ can be made arbitrarily large by choosing an appropriate degree-$n$ extension $L/K$, where $n \ge 2$ is a fixed integer (Theorem 3 of \cite{C}). For $n=2$ and $K=\mathbb{Q}$, Matsuno gave an alternative proof of the result of Clark and Sharif \cite{M}. We prove that we can take $D$ such that $\dfrac{\#\Sha(E/\mathbb{Q}(\sqrt{D}))[4]}{\#\Sha(E_D/\mathbb{Q})[2]}\to \infty$ and $\#\Sha(E_D/\mathbb{Q})[2]\to \infty$.

\begin{theorem}[Theorem \ref{cor}]
 For an arbitrary $r\in \Bbb{Z}$ and an elliptic curve over $\Bbb{Q}$ with $E(\Bbb{Q})[2]\cong \Bbb{Z}/2\Bbb{Z}$ that does not have a cyclic 4-isogeny defined over $\Bbb{Q}(E[2])$, there exists a square-free integer $D$ such that
 
 $\dfrac{\#\Sha(E/\Bbb{Q}(\sqrt{D}))[4]}{\#\Sha(E_D/\Bbb{Q})[2]}\ge r$ and $\#\Sha(E_D/\Bbb{Q})[2]\ge r$.
\end{theorem}

The method of the proof is to investigate the 2-torsion subgroup of Yu's formula \cite{Yu}. See Remark \ref{difficult1}. By the proof of Theorem 4.10 in \cite{O2}, we can make $\Sha(E/\Bbb{Q}(\sqrt{D}))[2]$ large while making $\#\mathrm{Sel}^2(E_D/\Bbb{Q})$ smaller than  a constant. We prove that  we can make $\dfrac{\#\Sha(E/\Bbb{Q}(\sqrt{D}))[4]}{\#\Sha(E_D/\Bbb{Q})[2]}$ large while making $\mathrm{Sel}^2(E_D/\Bbb{Q})$ large but keeping $\mathrm{rank}(E_D/\Bbb{Q})$ to be $0$.

The following Theorem \ref{Zhe} plays an important role both in the proof of Theorem \ref{cor} and in our investigation of the 2-torsion subgroup version of Yu's formula.

\begin{theorem}[Theorem \ref{Zhe}]
Let $E/K$ be an elliptic curve over $K$ and $n$ be a positive integer. Then, 
\[\begin{array}{ccc}
X \coloneqq
\mathrm{Coker}\bigg(H^1(G_K,E)[n] & \stackrel{\bigoplus_v{\mathrm{res}_v}}{\longrightarrow} & \bigoplus_{v\in M_K} H^1(G_{K_v},E)[n]\bigg)\\

\hspace{2cm}\rotatebox{90}{$\in$} & &\rotatebox{90}{$\in$} \\

\hspace{2cm} \left[f\right] & \longmapsto & (\left[ f\mid_{G_{K_v}}\right])_v

\end{array}\]
is a finite group and $\#X \le \#
\mathrm{Sel}^n(E/K)$.
When $n$ is a prime number, $\#X=\#\mathrm{Sel}^n(E/K)$ holds. 

\end{theorem}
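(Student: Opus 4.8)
The plan is to compute the Pontryagin dual of $X$ using local Tate duality together with the Poitou--Tate exact sequence for the Galois module $E[n]$, and thereby identify $X^\vee$ with a quotient of the Selmer group. First I would record the local input: for each place $v$, local Tate duality for the (self-dual) elliptic curve gives a perfect pairing $H^1(G_{K_v},E)[n]\times E(K_v)/nE(K_v)\to \mathbb{Q}/\mathbb{Z}$, so that $\bigl(\bigoplus_v H^1(G_{K_v},E)[n]\bigr)^\vee\cong \prod_v E(K_v)/nE(K_v)$. Since the restriction map $r$ lands in the direct sum (good reduction kills almost all local components, as noted in the introduction), dualizing turns $X=\mathrm{Coker}\,r$ into the annihilator of $\mathrm{im}\,r$, namely $X^\vee=\{(P_v)_v\in\prod_v E(K_v)/nE(K_v): \sum_v\langle \mathrm{res}_v(c),P_v\rangle_v=0 \text{ for all } c\in H^1(G_K,E)[n]\}$.

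Next I would translate this orthogonality condition to the level of $E[n]$. Lifting $c$ through the global Kummer sequence $0\to E(K)/nE(K)\to H^1(G_K,E[n])\xrightarrow{q} H^1(G_K,E)[n]\to 0$, and using that the local image $L_v:=\mathrm{im}(E(K_v)/nE(K_v)\to H^1(G_{K_v},E[n]))$ is a maximal isotropic subspace for the local Weil-pairing-induced cup product, each local term rewrites as $\langle\mathrm{res}_v(c),P_v\rangle_v=\langle \lambda(\tilde c)_v,\kappa_v(P_v)\rangle_v$, where $\lambda$ is localization on $H^1(G_K,E[n])$ and $\kappa_v$ the local Kummer map. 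Thus $(P_v)\in X^\vee$ iff $(\kappa_v(P_v))_v\in{\prod}'_v H^1(G_{K_v},E[n])$ is orthogonal to $\mathrm{im}\,\lambda$ under the sum-of-local-pairings map $\psi$. The Poitou--Tate nine-term sequence for the self-dual module $E[n]$ asserts exactly that the annihilator of $\mathrm{im}\,\lambda$ equals $\mathrm{im}\,\lambda$ (that is, $\ker\psi=\mathrm{im}\,\lambda$ and $\psi\circ\lambda=0$). Hence $(\kappa_v(P_v))_v=\lambda(\tilde d)$ for some $\tilde d\in H^1(G_K,E[n])$; as each $\kappa_v(P_v)\in L_v$, the class $\tilde d$ meets all local Selmer conditions, so $\tilde d\in \mathrm{Sel}^n(E/K)$. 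Sending $\tilde d\mapsto (\kappa_v^{-1}\lambda(\tilde d)_v)_v$ then gives a surjection $\mathrm{Sel}^n(E/K)\twoheadrightarrow X^\vee$ with kernel $\{\tilde d:\lambda(\tilde d)=0\}=\Sha^1(K,E[n])$, the group of classes in $H^1(G_K,E[n])$ that are locally trivial everywhere. This produces the exact sequence $0\to \Sha^1(K,E[n])\to \mathrm{Sel}^n(E/K)\to X^\vee\to 0$, whence $X$ is finite and $\#X=\#\mathrm{Sel}^n(E/K)/\#\Sha^1(K,E[n])\le \#\mathrm{Sel}^n(E/K)$.

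For the prime case it remains to prove $\Sha^1(K,E[p])=0$. I would argue by inflation--restriction along $L=K(E[p])$, with $G=\mathrm{Gal}(L/K)\hookrightarrow \mathrm{GL}_2(\mathbb{F}_p)$: a class in $\Sha^1(K,E[p])$ restricts on $G_L$ to a homomorphism $G_L\to E[p]$ that is unramified with trivial Frobenii, hence $0$ by Chebotarev, so it inflates from some $\bar c\in H^1(G,E[p])$; and local triviality at all unramified places, again via Chebotarev, forces $\mathrm{res}_{\langle g\rangle}\bar c=0$ for every $g\in G$. The decisive point is that a $p$-Sylow $G_p$ of $G$ is cyclic, since $p\parallel \#\mathrm{GL}_2(\mathbb{F}_p)$; writing $G_p=\langle g_0\rangle$ we get $\mathrm{res}_{G_p}\bar c=\mathrm{res}_{\langle g_0\rangle}\bar c=0$, and since restriction to a $p$-Sylow is injective on the $p$-primary group $H^1(G,E[p])$ (as $\mathrm{cor}\circ\mathrm{res}=[G:G_p]$ is prime to $p$), we conclude $\bar c=0$ and hence $\Sha^1(K,E[p])=0$.

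The main obstacle is the middle step: making the duality bookkeeping exact. One must handle restricted products versus direct sums carefully, verify that $\mathrm{im}\,r$ is the \emph{exact} annihilator rather than merely contained in it, and check the compatibility of the three pairings (local Tate duality for $E$, the local cup product on $H^1(G_{K_v},E[n])$, and the global reciprocity/sum formula underlying Poitou--Tate), so that ``orthogonal to $\mathrm{im}\,\lambda$'' is faithfully translated into ``lies in $\mathrm{im}\,\lambda$''. Once self-duality of the local conditions ($L_v=L_v^{\perp}$) and Poitou--Tate exactness are in place, the remaining input for the prime case is the clean, self-contained cyclic-Sylow computation above, which works for every possible Galois image and hence for all primes $p$.
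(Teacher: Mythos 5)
Your argument is correct, and its engine is the same as the paper's: both identify the cokernel by combining the Poitou--Tate sequence for the self-dual module $E[n]$ with local Tate duality and the Kummer sequence, and in both arguments the obstruction to equality is the group of everywhere-locally-trivial classes in $H^1(G_K,E[n])$ --- your $\Sha^1(K,E[n])$, which plays the role of the paper's $\mathrm{Ker}\,\beta$. The differences are in execution, and they buy something. Where the paper runs a snake-lemma diagram over a finite set $S$, dualizes, and passes to the limit to obtain only the inequality $\#X\le\#\mathrm{Sel}^n(E/K)$, your annihilator bookkeeping yields the exact sequence $0\to\Sha^1(K,E[n])\to\mathrm{Sel}^n(E/K)\to X^\vee\to 0$, hence the exact formula $\#X=\#\mathrm{Sel}^n(E/K)/\#\Sha^1(K,E[n])$ for \emph{every} $n$; this is sharper than the paper's statement and is precisely the kind of generalization to composite $n$ that the paper says it has an eye towards (since $\Sha^1(K,E[n])$ can be nonzero for composite $n$, it also shows the inequality can be strict). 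Second, for prime $n=p$ the paper invokes Haberland's lemma (Theorem \ref{vanish}) as a black box to kill $\mathrm{Ker}\,\beta$, whereas you prove $\Sha^1(K,E[p])=0$ directly: inflation--restriction along $K(E[p])/K$, Chebotarev to realize every cyclic subgroup of $\mathrm{Gal}(K(E[p])/K)$ as a decomposition group of an unramified prime, and the cyclicity of the $p$-Sylow subgroup of $\mathrm{GL}_2(\mathbb{F}_p)$, so that restriction to it is simultaneously injective and zero. This is a Dvornicich--Zannier-style argument and is a legitimate self-contained replacement for the citation; in effect you reprove the special case of Haberland's lemma that the paper quotes. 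Two points should be made explicit in a finished write-up: at real places local duality must be taken with Tate-modified cohomology $\hat{H}^0$ rather than $E(K_v)/nE(K_v)$ (the paper elides this as well), and the exactness statement you use from Poitou--Tate (annihilator of $\mathrm{im}\,\lambda$ equals $\mathrm{im}\,\lambda$) is proved for $G_S$-cohomology with $S$ finite, so the same limit-over-$S$ step that the paper carries out explicitly is still needed to obtain it over all places with the restricted product; you flag both issues as "bookkeeping" but each deserves a sentence of proof.
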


In Cassels' "Arithmetic of curves of genus 1" parts I-VIII, the $X$ in the theorem mentioned above was represented and studied using the Cyrillic letter $\Zhe$ (see Appendix $2$ of \cite{Ca}). In Appendix $2$ of \cite{Ca}, it is stated that there exists a duality between $\Zhe$ and $\mathrm{Sel}^n(E/K)$ when $n$ is a prime number. With an eye towards potential generalizations for arbitrary $n$, we have provided a detailed proof of this result using arguments that build upon the proof of the Cassels--Poitou--Tate duality (1.5. of Chapter 1 \cite{Coates}) and Theorem \ref{vanish}.

\vskip\baselineskip

Regarding the possibility of decreasing $\Sha(E_D/\Bbb{Q})[2]$, we consider the elliptic curves of the form $E\colon y^2=x^3+px$ where $p$ is a prime number. Klagsbrun's work [Theorem 1.1, \cite{Klagsbrun}] or Smith's recent work on the distribution of $\mathrm{Sel}^2(E_D/\Bbb{Q})$[Theorem 1.5, \cite{Smith}]
imply there exist infinitely many square-free integers $D$ such that $\mathrm{Sel}^2(E_D/\Bbb{Q})\cong \Bbb{Z}/2\Bbb{Z}$.
Let $-D$ be a prime number that satisfies appropriate congruence conditions modulo $4$ and $p$. We prove that $\Sha(E_D/\Bbb{Q})[2]=0$,  under the assumption that the Tate--Shafarevich group is finite. See Proposition \ref{mainlemma}. For elliptic curves of the form $y^2=x^3+px$ with $p\equiv 1\bmod 4$ is a prime number, we also prove that there exists a $D$ with $-D$ being a prime such that $\#\Sha(E/\Bbb{Q}(\sqrt{D}))[2]\le 4 $ and $\Sha(E_D/\Bbb{Q})[2]=0$ under the assumption that the Tate--Shafarevich group is finite. We also prove that $\Sha(E/\Bbb{Q}(\sqrt{-D}))[2]$ cannot be made trivial for any $D$ when $p=257$. See example \ref{impossible}.
\vskip\baselineskip

\begin{proposition}[Proposition \ref{main theorem 3}]
Let $p\equiv 1 \bmod 4$ be a prime number, and let $E:y^2=x^3+px$ be an elliptic curve.  
\begin{enumerate}
\item 
There exist infinitely many imaginary quadratic fields 
$K=\Bbb{Q}(\sqrt{D})$ with $-D$ being a prime number such that $\#\Sha(E/K)[2] \le 4$ and $\Sha(E_D/\Bbb{Q})[2]=0$ under the assumption that $\#\Sha(E/K)$ and $\#\Sha(E_D/\Bbb{Q})$ are finite. 
\item 

If $\Sha(E/\Bbb{Q})$ contains an element of order $4$, then for any quadratic number field $K=\Bbb{Q}(\sqrt{D})$, $\Sha(E/K)[2] \neq 0$.
 
\end{enumerate}

\end{proposition}

The structure of the paper is as follows. In Chapter 3, we determine the cokernel $X$ (Theorem \ref{Zhe}). We also calculate the local cohomology and global cohomology associated with quadratic extensions. In Chapter 4, we prove the main theorem by examining the 2-part of Yu's formula using $X$. The mechanism of growth is in the part that increases (local cohomology)/(global cohomology) (called $g(D)$ in Proposition \ref{g(D)}). By evaluating the image of the corestriction map from below (\ref{cores}), we derive that the ratio $\dfrac{\#\Sha(E/\Bbb{Q}(\sqrt{D}))[4]}{\#\Sha(E_D/\Bbb{Q})[2]}$ increases in conjunction with the growth of $g(D)$. In Chapter 5, for elliptic curves of the form $y^2=x^3+px$ (where $p$ is a prime), we investigate through 2-descent calculations how quadratic extensions and twists by primes can reduce the size of the 2-part of the Tate-Shafarevich group.

\section{Notation}
Let us fix the notation as follows: 

\begin{itemize}[label=\textbullet] 
\item $K$: a number field, $O_K$ : ring of integers of $K$.
\item $M_K$: the set of all places of $K$.
\item $K_v$: the completion of $K$ at place $v\in M_K$.
\item $G_L$: the absolute Galois group of a field $L$, that is, $\mathrm{Gal}(\overline{L}/L)$.

\item For an Abelian group $A$ and an integer $n \geq 2$, we define $A[n]$($n$-torsion subgroup of $A$) to be $\{a \in A \mid na = 0\}$ and $nA$ to be $\{na \mid a \in A \}$.

\item For an Abelian group $A$ and a prime number $p$, we define $A[p^{\infty}]$($p$-primary part of $A$) to be $A[p^{\infty}]\coloneqq\{a\in A | \exists n\ge 0, p^n a = 0\}$.

\item For a locally compact group $A$, $A^*$ is the Pontryagin dual. For a group homomorphism $f : A\to B$ between locally compact group $A$ and $B$, $f^* : B^*\to A^*$ is given by $g\mapsto g\circ f$.
\item For a group homomorphism $f: N\to M$, $f(N)$ is the image of $N$ under $f$. 

\item $\mathrm{rank}(E/K)$: the Mordell-Weil rank of elliptic curve $E/K$, $\Delta_E$: discriminant of $E/K$, $E(K)_{\text{tor}}$: torsion subgroup of $E(K)$. 

\item $E_D/K$: the quadratic twist of $E/K$ by a square-free integer $D$. Namely, if $E/K$ is the elliptic curve defined by $y^2=x^3+ax+b$, then the quadratic twist is an elliptic curve given by the equation $E_D : D y^2=x^3+ax+b$. Quadratic twist $E_D$ is isomorphic to $E$ over $K(\sqrt{D})$ but not isomorphic over $K$. We fix an isomorphism $\tau : E(L)\cong E_D(L), (x,y)\mapsto (x,\frac{y}{\sqrt{D}})$.

\item For an elliptic curve $E/K$, another elliptic curve $E'$ and a nonzero isogeny $\phi : E\to E'$, the $\phi$-Selmer group of $E/K$ is defined as follows:
\[\begin{array}{ccc}
\mathrm{Sel}^{\phi}(E/K) \stackrel{\mathrm{def}}{=} \mathrm{Ker}\bigg(H^1(G_K,E[\phi]) & \to & \prod_{v\in M_K} H^1(G_{K_v},E)[\phi]\bigg).
\end{array}\]
When $E=E'$ and $\phi=[n]$ (multiplication-by-$n$ map), we denote its Selmer group by $\mathrm{Sel}^n(E/K)$.

\end{itemize}

There exists an exact sequence
\begin{equation}
0\to E(K)/nE(K)\to \mathrm{Sel}^n(E/K)\to \Sha(E/K)[n]\to 0 \label{basic}
\end{equation}
and $\Sha(E/K)[n]$ is finite since the $n$-Selmer group is finite
(see Theorem 4.2 in Chapter X of \cite{sil}).

\section{Local cohomology and Global cohomology}

\subsection{Switch local to global}\par

In this section, we determine the cokernel of the restriction map $H^1(G_K,E)[2] \to \bigoplus_{v\in M_K} H^1(G_{K_v},E)[2]$
(Theorem \ref{Zhe}). We denote this cokernel as $X$ in
Theorem \ref{Zhe}. We prove that $X$ is isomorphic to the dual of the $2$-Selmer group, which lives in the global cohomology $H^1(G_K, E[2])$.

\begin{theorem}[\mbox{\rm\textit{cf.}} \cite{Neu}, (8.6.10),  Long Exact Sequence of Poitou--Tate]\label{Poitou}

Let $S$ be a nonempty set of places of a number field $K$ and assume that $S$ contains all infinite places of $K$. Let $K_S$ be the maximal unramified extension  of $K$ outside $S$ and $G_S\coloneqq \mathrm{Gal}(K_S/K)$. Let $M$ be a finite $G_S$ module and $M'=\mathrm{Hom}(M, \mu)$ where $\mu$ is the group of roots of unity in ${K_S}^{\times}$. The following $9$-term exact sequence exists:

\[\begin{tikzcd}[sep=small]
0\arrow[r] & 
H^0(G_S,M)\arrow[r, "\alpha"] &
\displaystyle\prod_{v\in S}H^0(G_{K_v},M)\arrow[r] &
H^2(G_S,M')^* \arrow[d,bend left,shift left=8] \\
 & 
H^1(G_S,M')^* \arrow[d,bend right,shift right=10] &
\displaystyle\sideset{}{'}\prod_{v\in S} H^1(G_{K_v},M) \arrow[l] &
H^1(G_S,M) \arrow[l,"\beta"] \\
&
H^2(G_S,M) \arrow[r,"\gamma"] &
\displaystyle\bigoplus_{v\in S} H^2(G_{K_v},M) \arrow[r] &
H^0(G_S,M')^* \arrow[r] &
0 .
\end{tikzcd}\]

Here, $\alpha$, $\beta$, and $\gamma$ are localization maps and $\prod'_v$ is a restricted product with respect to unramified cohomology $H_{un}^1(G_{K_v},M)$.

\end{theorem}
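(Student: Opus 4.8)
The plan is to realize the nine-term sequence as the concatenation of a single localization long exact sequence with two duality isomorphisms, so that the real content is pushed into one global duality theorem and everything else becomes bookkeeping with Pontryagin duals. Write $P^i(M)$ for the term $\sideset{}{'}\prod_{v\in S}H^i(G_{K_v},M)$ occurring in the statement: a genuine product for $i=0$, the restricted product with respect to the unramified classes for $i=1$, and a direct sum for $i=2$. The first ingredient I would record is \textbf{local Tate duality}: for every place $v\in S$ and every $i$, cup product followed by the local invariant map yields a perfect pairing of locally compact groups
\[H^i(G_{K_v},M)\times H^{2-i}(G_{K_v},M')\longrightarrow H^2(G_{K_v},\mu)=\Bbb{Q}/\Bbb{Z},\]
and the restricted-product topology is chosen precisely so that this globalizes to a perfect pairing $P^i(M)\times P^{2-i}(M')\to\Bbb{Q}/\Bbb{Z}$, i.e.\ $P^i(M)^*\cong P^{2-i}(M')$. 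I would take this as a known input.

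Next I would introduce \textbf{compactly supported cohomology} $H^i_c(G_S,M)$, defined as the cohomology of the mapping cone of the localization map of cochain complexes $C^\bullet(G_S,M)\to\bigoplus_{v\in S}C^\bullet(G_{K_v},M)$. By construction of the cone there is a long exact sequence
\[\cdots\to H^i_c(G_S,M)\to H^i(G_S,M)\to P^i(M)\to H^{i+1}_c(G_S,M)\to\cdots,\]
in which the middle maps are exactly the localization (restriction) maps. Splicing this in degrees $0,1,2,3$ produces a single exact sequence tying together the three groups $H^i(G_S,M)$, the three local terms $P^i(M)$, and the four groups $H^i_c(G_S,M)$; the sequence truncates to finite length because $H^i(G_S,M)=P^i(M)=0$ for $i\ge 3$, using cohomological-dimension bounds on $G_S$ together with the usual Tate-cohomology conventions at the archimedean places.

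The heart of the matter, and the step where I expect essentially all the difficulty to lie, is the \textbf{global duality theorem}
\[H^i_c(G_S,M)\cong H^{3-i}(G_S,M')^*\qquad(i=0,1,2,3).\]
This is the genuinely arithmetic input: its proof requires global class field theory — the computation of the cohomology of the idele-class group, the fundamental class, and the global reciprocity (invariant) maps — all patched against the local dualities above. I would establish it by first treating the case $M=\mu$, where the isomorphism degenerates into the reciprocity exact sequence of class field theory, and then bootstrapping to an arbitrary finite module $M$ by dimension shifting and a five-lemma comparison of long exact sequences. Concretely this reduces the whole statement to the duality in the two boundary degrees together with the Cassels--Poitou--Tate duality $\Sha^1(G_S,M)\cong\Sha^2(G_S,M')^*$ between the kernels of the localization maps in the middle. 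This is exactly the content cited from \cite{Neu}, and it is where the real work sits.

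Finally I would \textbf{assemble} the nine-term sequence by substituting the duality isomorphisms into the localization long exact sequence. Since $H^0_c(G_S,M)\cong H^3(G_S,M')^*=0$, the localization map $\alpha$ is injective; replacing the groups $H^1_c,H^2_c,H^3_c$ by $H^2(G_S,M')^*$, $H^1(G_S,M')^*$, and $H^0(G_S,M')^*$ respectively, and using $H^3(G_S,M)=0$ to close off the tail, identifies the localization sequence with precisely the nine-term sequence displayed in the statement, the maps $\alpha,\beta,\gamma$ being the localization maps in degrees $0,1,2$. The only remaining bookkeeping is to check that the connecting homomorphisms and their Pontryagin duals point in the directions displayed; this is routine once the duality isomorphism is in hand, so that the global duality theorem of the third step is the single genuine obstacle, with the rest being formal manipulation of exact sequences and duals.
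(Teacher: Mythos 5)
The paper itself offers no proof of this statement---it is imported wholesale from Neukirch--Schmidt--Wingberg (8.6.10)---so your proposal can only be measured against the standard literature proof, and in architecture it matches that proof exactly (it is Milne's derivation in \emph{Arithmetic Duality Theorems}, I.4, via compactly supported cohomology, local Tate duality on the restricted products, and the global duality $H^i_c(G_S,M)\cong H^{3-i}(G_S,M')^*$). However, two steps would fail as you have written them. First, your truncation claim that $H^i(G_S,M)=P^i(M)=0$ for $i\ge 3$ is false precisely in the case this paper needs, namely $K=\mathbb{Q}$ and $M=E[2]$: when $K$ has a real place and $2\mid\#M$, complex conjugation is an element of order $2$ in $G_S$, so $\mathrm{cd}_2(G_S)=\infty$, and in fact $H^i(G_S,M)\cong\bigoplus_{v\ \mathrm{real}}H^i(G_{K_v},M)$ for $i\ge 3$ (e.g.\ $H^3(G_S,\mathbb{Z}/2)\neq 0$ already for $K=\mathbb{Q}$). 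The correct closing argument is not vanishing but the statement that, with Tate-modified cohomology at the archimedean places (which must also be used in the degree-$0$ local term $\hat{H}^0(G_{K_v},M)$ for the local pairings to be perfect---another point your sketch glosses over), the localization map is an \emph{isomorphism} in all degrees $\ge 3$; injectivity of $H^3(G_S,M)\to P^3(M)$ then forces $P^2(M)\to H^3_c(G_S,M)\cong H^0(G_S,M')^*$ to be surjective, which is what closes the nine-term sequence on the right. Second, when $S$ is infinite---which the statement permits---the mapping cone over $\bigoplus_{v\in S}C^\bullet(G_{K_v},M)$ computes direct sums in every degree, which is neither the full product required at $i=0$ nor the restricted product at $i=1$; moreover the localization map on $H^1$ genuinely lands in the restricted product and not in the direct sum. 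You must either build the cone from restricted cochain complexes or prove the finite-$S$ case first and pass to the limit over finite subsets of $S$, as Milne and NSW do. (For this paper's application in Theorem \ref{Zhe} the set $S$ is finite, so the second point is harmless there, but your proof is of the statement as given.)

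A further caution worth recording: the global duality theorem you place at the heart of the argument requires $\#M$ to be an $S$-unit, i.e.\ $S$ must contain all primes dividing $\#M$; otherwise $\mu$ need not contain the relevant roots of unity in $K_S^\times$ (since $\mathbb{Q}(\zeta_{p^k})/\mathbb{Q}$ ramifies at $p$), the module $M'=\mathrm{Hom}(M,\mu)$ degenerates, and the duality fails. This hypothesis is present in NSW (8.6.10) but silently dropped both in the paper's statement and in your write-up, and it is not vacuous downstream, where the theorem is applied with $M=E[n]$ and $S$ consisting only of the archimedean and bad-reduction places.
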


\begin{theorem}\label{vanish}
Let $S$ be a set of places of $K$ containing all infinite places. 
Let $K_S$ be the maximal unramified extension  of $K$ outside $S$ and $G_S\coloneqq \mathrm{Gal}(K_S/K)$. Let $M$ be a finite $G_S$-module. 

Let $p$ be a prime.
 If $pM=0$ and $\mathrm{dim}_{\Bbb{F}_p}M \le 2$ holds, then \[\mathrm{Ker}\left(H^1(G_S,M)\stackrel{\beta}{\to}     \displaystyle\sideset{}{'}\prod_{v\in S} H^1(G_{K_v},M)\right)=0.\]

\end{theorem}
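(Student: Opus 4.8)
The plan is to recognise $\ker\beta$ as the group $\Sha^1_S(M)$ of classes in $H^1(G_S,M)$ that become trivial in $H^1(G_{K_v},M)$ for every $v\in S$, and then to reduce its vanishing to a purely group-theoretic statement about the finite group $G:=\mathrm{Gal}(F/K)$, where $F\subseteq K_S$ is the splitting field of the $G_S$-action on $M$. Because $pM=0$ and $\dim_{\mathbb{F}_p}M\le 2$, this action embeds $G\hookrightarrow\mathrm{Aut}(M)\cong\mathrm{GL}_d(\mathbb{F}_p)$ with $d\le 2$. The decisive consequence of the hypothesis $d\le 2$ is that $p$ divides $\#\mathrm{GL}_d(\mathbb{F}_p)$ at most to the first power (indeed $\#\mathrm{GL}_2(\mathbb{F}_p)=p(p-1)^2(p+1)$), so that a $p$-Sylow subgroup $G_p\subseteq G$ is cyclic, of order $1$ or $p$.

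First I would analyse a class $c\in\ker\beta$ through the inflation--restriction sequence attached to $1\to\mathrm{Gal}(K_S/F)\to G_S\to G\to 1$. Its restriction to $\mathrm{Gal}(K_S/F)$ is a $G$-equivariant homomorphism $\phi\colon\mathrm{Gal}(K_S/F)\to M$; local triviality at $S$ forces the abelian extension cut out by $\phi$ to split completely at the places above $S$, and a Chebotarev density argument then forces $\phi=0$. Hence $c$ is inflated from a class $\bar{c}\in H^1(G,M)$. Next, for each $g\in G$ I would use Chebotarev to select a place $v$, unramified in $F/K$, with Frobenius $g$; since inflation $H^1(\langle g\rangle,M)\hookrightarrow H^1(G_{K_v},M)$ is injective, the local triviality of $c$ at $v$ gives $\bar{c}|_{\langle g\rangle}=0$. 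Ranging over all $g$, this places $\bar{c}$ in the subgroup $\Sha^1(G,M)=\ker\bigl(H^1(G,M)\to\prod_{C}H^1(C,M)\bigr)$, the intersection of the kernels of restriction to all cyclic subgroups $C\subseteq G$.

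It then remains to prove $\Sha^1(G,M)=0$. As $pM=0$, the group $H^1(G,M)$ is $p$-primary, so the identity $\mathrm{cor}\circ\mathrm{res}=[G:G_p]$, a unit modulo $p$, shows that restriction $H^1(G,M)\hookrightarrow H^1(G_p,M)$ is injective. Since $G_p$ is cyclic it is itself one of the subgroups $C$ occurring in the definition of $\Sha^1(G,M)$, so $\bar{c}|_{G_p}=0$, whence $\bar{c}=0$ and finally $c=0$. This is precisely where $d\le 2$ is indispensable: for $d\ge 3$ the $p$-Sylow subgroup of $\mathrm{GL}_d(\mathbb{F}_p)$ is a non-cyclic unipotent group, $\Sha^1(G,M)$ need not vanish, and the conclusion can fail.

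The main obstacle is not this last computation but the reduction step, namely converting the arithmetic hypothesis ``locally trivial at every $v\in S$'' into the algebraic hypothesis ``trivial on every cyclic subgroup of $G$''. The subtlety is that Chebotarev produces only primes of positive density, so one must guarantee that the Frobenius elements tested genuinely arise from places of $S$ and that the splitting used to kill $\phi$ is controlled by the available local conditions. This is exactly the role of the Poitou--Tate formalism of Theorem~\ref{Poitou}: working inside that exact sequence, with the Cartier dual $M'=\mathrm{Hom}(M,\mu)$ and the restricted product, is what lets one leverage the local conditions at $S$ rigorously and match them against the global cohomology. Making this bridge precise, and checking that the injectivity of inflation for the local extensions $F_w/K_v$ survives the restricted-product formalism, is the technical heart of the argument; the group-cohomological vanishing in the previous paragraph is then a short and clean conclusion.
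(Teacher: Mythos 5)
Your group-theoretic endgame is correct and does isolate where $\dim_{\mathbb{F}_p}M\le 2$ enters: $\#\mathrm{GL}_2(\mathbb{F}_p)=p(p-1)^2(p+1)$, so the $p$-Sylow subgroup of $G=\mathrm{Gal}(F/K)$ is cyclic of order $1$ or $p$, restriction to it is injective on the $\mathbb{F}_p$-vector space $H^1(G,M)$ by restriction--corestriction, and any class killed by restriction to every cyclic subgroup of $G$ therefore vanishes. Note also that the paper itself supplies no argument to compare against (its proof is the citation to Haberland), so your attempt must stand on its own; it does not, because the reduction to that endgame contains a genuine gap --- precisely the step you flag in your final paragraph and then defer to ``the Poitou--Tate formalism'' without carrying out.

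The gap is this: both of your Chebotarev steps test the class $c$ at places produced by the Chebotarev density theorem, and such places lie outside $S$ (in the second step you need to realize \emph{every} $g\in G$ as a Frobenius, which is impossible using only places of $S$ when $S$ is finite, as it is in the paper's application). But $c\in\mathrm{Ker}\,\beta$ gives local triviality only at $v\in S$; at $v\notin S$ the class is merely unramified, and unramified classes in $H^1(G_{K_v},M)$ are in general nonzero, so nothing can be concluded about $\phi$ or about $\bar c|_{\langle g\rangle}$ at those places. In particular, your first step amounts to the claim that an elementary abelian $p$-extension of $F$, unramified outside $S$ and completely split at all places above $S$, must be trivial; this is a class-field-theoretic statement, not a Chebotarev statement, and it is false. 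For example, take $K=F=\mathbb{Q}(\sqrt{-31})$, $p=3$, $M=\mathbb{Z}/3\mathbb{Z}$ with trivial action (so $\dim_{\mathbb{F}_3}M=1\le 2$), and $S$ consisting of the infinite place and the prime above $3$ (which is inert and principal): the Hilbert class field $H/K$ is a $\mathbb{Z}/3\mathbb{Z}$-extension, unramified everywhere and completely split at both places of $S$, so the inflation of a nonzero homomorphism $\mathrm{Gal}(H/K)\to\mathbb{Z}/3\mathbb{Z}$ is a nonzero element of $\mathrm{Ker}\,\beta$. This shows two things at once: the implication you attribute to Chebotarev genuinely fails, and the statement under the hypotheses as literally written cannot be proved at all --- any correct proof (Haberland's included) must invoke hypotheses on $S$ and arithmetic input beyond density arguments. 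What your argument does establish is the different assertion that a class in $H^1(G_S,M)$ which is locally trivial at \emph{every} place of $K$ vanishes; converting ``trivial at $S$'' into ``trivial everywhere'' is the entire content of the lemma, and it is missing from your proposal.
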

 
 \begin{proof}
See [\cite{Ha}, Section 7.2, Corollary of Lemma 2].
 \end{proof}

\begin{theorem}\label{Zhe}
 Let $E/K$ be an elliptic curve and $n$ be a positive integer. Then, 
\[\begin{array}{ccc}
X \coloneqq
\mathrm{Coker}\bigg(H^1(G_K,E)[n] & \stackrel{\bigoplus_v{\mathrm{res}_v}}{\longrightarrow} & \bigoplus_{v\in M_K} H^1(G_{K_v},E)[n]\bigg)\\

\hspace{2cm}\rotatebox{90}{$\in$} & &\rotatebox{90}{$\in$} \\

\hspace{2cm} \left[f\right] & \longmapsto & (\left[ f\mid_{G_{K_v}}\right])_v

\end{array}\]
is a finite group and $\#X \le \#
\mathrm{Sel}^n(E/K)$.
When $n$ is a prime number, $\#X=\#\mathrm{Sel}^n(E/K)$ holds. 

\end{theorem}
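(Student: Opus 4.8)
The plan is to identify $X$ with the Pontryagin dual of the $n$-Selmer group by combining local Tate duality with the Poitou--Tate sequence of Theorem \ref{Poitou}, and then to read off the inequality (and, for $n$ prime, the equality) from the injectivity statement of Theorem \ref{vanish}.

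First I would fix a finite set $S$ of places containing the archimedean places, the places of bad reduction, and those dividing $n$, and recall local Tate duality: for each $v$, cup product together with the Weil pairing (which gives a $G_K$-isomorphism $M' = \mathrm{Hom}(E[n],\mu) \cong E[n] = M$) yields a perfect pairing on $H^1(G_{K_v}, E[n])$ under which the image $W_v$ of the Kummer map $\delta_v\colon E(K_v)/nE(K_v) \hookrightarrow H^1(G_{K_v}, E[n])$ satisfies $W_v = W_v^{\perp}$. Hence $H^1(G_{K_v}, E)[n] \cong H^1(G_{K_v}, E[n])/W_v \cong (W_v)^*$, and for $v \notin S$ the subgroup $W_v$ coincides with the unramified part $H^1_{un}(G_{K_v}, E[n])$. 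Writing $\lambda\colon H^1(G_K, E)[n] \to \bigoplus_v H^1(G_{K_v}, E)[n]$ for the map with cokernel $X$, and using that the dual of a direct sum of finite groups is the corresponding product, I obtain $X^* = \mathrm{Ker}(\lambda^*)$, where $\lambda^*\colon \prod_v W_v \to (H^1(G_K, E)[n])^*$; since $W_v = H^1_{un}(G_{K_v}, E[n])$ for almost all $v$, the group $\prod_v W_v$ sits inside the restricted product $\prod'_v H^1(G_{K_v}, E[n])$ appearing in Theorem \ref{Poitou}.

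Next I would build the comparison map $\mathrm{Sel}^n(E/K) \to X^*$ sending a class $s$ to the tuple $(\mathrm{res}_v s)_v$, which lands in $\prod_v W_v$ precisely because $s$ is Selmer. The key computation is that, with $\pi, \pi_v$ denoting the quotient maps $H^1(\,\cdot\,, E[n]) \to H^1(\,\cdot\,, E)[n]$ and using $W_v = W_v^{\perp}$, the Poitou--Tate map $\rho\colon \prod'_v H^1(G_{K_v}, E[n]) \to H^1(G_S, E[n])^*$ restricted to $\prod_v W_v$ satisfies $\rho((\xi_v))(c) = \lambda^*((\xi_v))(\pi c)$ for every $c \in H^1(G_S, E[n])$. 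Because $\rho \circ \beta = 0$ by exactness, this identity shows at once that the comparison map is well defined; conversely, if $(\xi_v) \in \mathrm{Ker}(\lambda^*)$ then $\rho((\xi_v)) = 0$, so by exactness $\mathrm{Im}\,\beta = \mathrm{Ker}\,\rho$ there is $s \in H^1(G_S, E[n])$ with $(\xi_v) = \beta(s) = (\mathrm{res}_v s)_v$, and $\xi_v \in W_v = \mathrm{Ker}\,\pi_v$ forces $s \in \mathrm{Sel}^n(E/K)$. This gives surjectivity, hence $\#X^* \le \#\mathrm{Sel}^n(E/K)$; as $\mathrm{Sel}^n(E/K)$ is finite, $X^*$ is finite, and Pontryagin double duality applied to the discrete group $X$ yields $X \cong X^{**}$ finite with $\#X = \#X^* \le \#\mathrm{Sel}^n(E/K)$.

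Finally, to upgrade this to equality when $n = p$ is prime, I would compute the kernel of the comparison map as $\mathrm{Sel}^n(E/K) \cap \mathrm{Ker}(\beta)$: since $E[p]$ is a two-dimensional $\mathbb{F}_p$-vector space, Theorem \ref{vanish} gives $\mathrm{Ker}(\beta) = 0$, so the map is also injective and hence an isomorphism, whence $\#X = \#\mathrm{Sel}^n(E/K)$. The main obstacle is the bookkeeping between the restricted product $\prod'_v$ in Poitou--Tate and the direct sum $\bigoplus_v$ defining $X$, and the passage between $H^1(G_S, E[n])$ and $H^1(G_K, E[n])$: one must verify that $\prod_v W_v$ really lands in the restricted product (automatic from $W_v = H^1_{un}$ for $v \notin S$) and that the pairing identity $\rho((\xi_v))(c) = \lambda^*((\xi_v))(\pi c)$ holds exactly. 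This is where the argument genuinely rests on the internal mechanics of the Cassels--Poitou--Tate duality proof, and it is also the sole point where the distinction between prime and composite $n$ enters, through the dimension hypothesis of Theorem \ref{vanish}.
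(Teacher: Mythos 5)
Your proposal is correct and takes essentially the same approach as the paper: both arguments identify $X$ (up to Pontryagin duality) with $\mathrm{Sel}^n(E/K)$ by combining the Poitou--Tate sequence of Theorem \ref{Poitou} applied to $M=E[n]$, local Tate duality in the form of the self-orthogonality of the image $W_v$ of the local Kummer map, and Theorem \ref{vanish} to kill $\mathrm{Ker}\beta$ in the prime case. The differences are purely organizational: you build the surjection $\mathrm{Sel}^n(E/K)\twoheadrightarrow X^*$ directly, whereas the paper dualizes a snake-lemma sequence into the Cassels--Poitou--Tate dual exact sequence, takes a direct limit over finite $S$, and then injects $X$ into $\mathrm{Sel}^n(E/K)^*$; the $S$-versus-$M_K$ bookkeeping you flag as the main obstacle is settled at once by taking $S=M_K$ in Theorem \ref{Poitou} (so that $G_S=G_K$ and the restricted product runs over all places, with your finite set of bad places only needed to check that $\prod_v W_v$ lies in the restricted product), which the theorem's hypotheses permit.
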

\begin{proof}

Let $S$ be a finite subset of $M_K$ containing the infinite places of $K$ and the primes of bad reduction for $E/K$.

The exactness of the following sequence forms part of the long exact sequence in the Poitou--Tate duality (Theorem \ref{Poitou}) when we set $M=E[n]$.

\[H^1(G_S, E[n]) \xrightarrow{\beta} \bigoplus_{v\in S} H^1(G_{K_v}, E[n]) \xrightarrow{\beta^* \circ \psi} H^1(G_S, E[n])^*\]

Here, $\psi \coloneqq \prod_{v\in S} \psi_v$, where $\psi_v$ is the isomorphism given by the local Tate duality:\[\psi_v \colon H^1(G_{K_v},E[n]) \cong H^1(G_{K_v},E[n])^*\](see [\cite{Neu}, Theorem 7.2.6]).

Let $\iota$ be the map that makes the following diagram commutative:

\[\begin{tikzcd}[column sep=small, row sep=large]
0 \arrow[r] & H^1(G_S,E[n])\arrow[r,"\beta"] \arrow[d, equals] & \bigoplus_{v\in S} H^1(G_{K_v},E[n])\arrow[r,"h"] \arrow[d, equals] &
\mathrm{Coker}\beta
\arrow[r] 
 \arrow[d,"\iota"] & 0 \\  
0 \arrow[r] & H^1(G_S,E[n]) \arrow[r,"\beta"] & \bigoplus_{v\in S} H^1(G_{K_v},E[n])  \arrow[r,"\beta^*\circ \psi"] &H^1(G_S,E[n])^* .\\
\end{tikzcd}\]
From the above diagram, $\iota$ is injective. Let us consider the following diagram. 

\[
\begin{CD}
@.\mathrm{Ker}\beta @>>>
\mathrm{Ker}\phi_S\\
@. @VVV @VVV\\
0@>>>H^1(G_S,E[n])@=H^1(G_S,E[n])\\
@VVV @VV\beta V @VV\phi_S V \\
\bigoplus_{v\in S}H^1(G_{K_v},E)[n]^*@>      
{\psi}^{-1}\circ \lambda^*>>\bigoplus_{v\in S} {H^1(G_{K_v},E[n])}@>\lambda>>\bigoplus_{v\in S}H^1(G_{K_v},E)[n] \\
@V = VV @V h VV \\
\bigoplus_{v\in S} H^1(G_{K_v},E)[n]^*@>j>>\mathrm{Coker}\beta. \\ \end{CD} \]
Here, $\phi_S \stackrel{\mathrm{def}}{=}\lambda \circ \beta$ and $\lambda\coloneqq \bigoplus_{v\in S} \lambda_v$, where $\lambda_v: H^1(G_{K_v},E[n])\to H^1(G_{K_v}, E)[n]$ is the map induced by a short exact sequence $0\to E[n]\to E\stackrel{\times n}{\to} E\to 0$ (see [\cite{sil}, Section V\hspace{-1.2pt}I\hspace{-1.2pt}I\hspace{-1.2pt}I.2] ).

Note that \[\bigoplus_{v\in S}H^1(G_{K_v},E)[n]^*\stackrel{{\psi}^{-1}\circ \lambda^*}{\to}    
\bigoplus_{v\in S} {H^1(G_{K_v},E[n])}\stackrel{\lambda}{\to}\bigoplus_{v\in S}H^1(G_{K_v},E)[n]\] is an exact sequence 
because \[\mathrm{Im}({\psi_v}^{-1}\circ {\lambda_v}^*)\cong \mathrm{Im}({\lambda_v}^*)\cong (\mathrm{Im}\lambda_v)^*\cong (H^1(G_{K_v},E)[n])^*\cong E(K_v)/nE(K_v)\cong \mathrm{Ker}\lambda_v\] where the isomorphism $H^1(G_{K_v},E)[n])^* \cong E(K_v)/nE(K_v)$ is given by the restricted Tate local duality (see [\cite{tate}, Proposition 1]).

By the snake lemma, we obtain the following exact sequence
\[\mathrm{Ker}\beta \to \mathrm{Ker}\phi_S\to \bigoplus_{v\in S} H^1(G_{K_v},E)[n]^*\stackrel{j}{\to} \mathrm{Coker}\beta.\]
We claim that the following diagram $\textcircled{\scriptsize a}$ is commutative. 

\[
\begin{tikzcd}
\mathrm{Ker}\beta \ar[r]&\mathrm{Ker}\phi_S\ar[r]&\bigoplus_{v\in S} H^1(G_{K_v},E)[n]^*\ar[r, "j"]\ar[dr, "{\phi_S}^*", swap ]&\mathrm{Coker}\beta\ar[d,"\iota", hook']\\
&&&H^1(G_K,E[n])^*
\end{tikzcd}
\cdots \textcircled{\scriptsize a}.\]
Indeed, this follows from the following diagram:
\[   
\begin{tikzcd}[
    every matrix/.append style={name=mycd},
    execute at end picture={
      \path (mycd-2-1) -- node[scale=2] {$\circlearrowleft$} node[scale=.5]{$(2)$} (mycd-3-2);
      \path ([xshift=11em]mycd-1-1) -- node[scale=2] {$\circlearrowright$} node[scale=.5]{$(1)$} ([xshift=15em]mycd-2-1);
      \path (mycd-2-1) -- node[yscale=3.2, xscale=-1.5, rotate=90, yshift=-.1em] {\textasciitilde} (mycd-3-1);
      }
    ]  
    \bigoplus_{v\in S} H^1(G_{K_v},E)[n]^*\ar[r, "j"]\ar[dr, "{\phi_S}^*", swap ]\ar[d,"\lambda^*", swap]& \mathrm{Coker}\beta \ar[d,"\iota", hook']\\ 
    \bigoplus_{v\in S} H^1(G_{K_v},E[n])^* \ar[r,"\beta^*"]\ar[dash, d,swap,
    "\psi", shift left=.5em, outer sep=.3em]&H^1(G_K,E[n])^*\\ \bigoplus_{v\in S} H^1(G_{K_v},E[n])\ar[r, "h"]& \mathrm{Coker}\beta\ar[u, "\iota", hook, swap].   
\end{tikzcd} 
\]
To prove the desired commutativity, it is sufficient to prove the commutativity of $(1)$ and $(2)$ of the above diagram because $h \circ {\psi}^{-1} \circ \lambda^* = j$. The commutativity of $(1)$ follows directly from the definition of $\phi_S$, and $(2)$ is exactly the definition of $\iota$, that is, $\iota \circ h=\beta^* \circ \psi$.

From the commutativity of the diagram $\text{\textcircled{\scriptsize a}}$, we obtain the following exact sequence: 
\[\mathrm{Ker}\beta \to \mathrm{Ker}\phi_S \to \bigoplus_{v\in S} H^1(G_{K_v},E)[n]^*
\stackrel{{\phi_S}^*}{\to}H^1(G_K, E[n])^*.\]

There is a canonical isomorphism $\mathrm{Ker}\phi_S \cong \mathrm{Sel}^n(E/K)$ (see [\cite{mil}, Chapter I, Corollary 6.6]).

By taking its Pontryagin dual, we obtain

\[ H^1(G_K,E[n])\stackrel{\phi_S}{\to} \bigoplus_{v\in S} H^1(G_{K_v},E)[n]\to {\mathrm{Sel}^n(E/K)}^*\to (\mathrm{Ker}\beta)^*.  \]

By taking the direct limit $\varinjlim_{S \subset M_K}$, we obtain the following exact sequence: 
\[0\to \text{Sel}^n(E/K)\to H^1(G_K, E[n])\xrightarrow{\phi}  \bigoplus_{v\in M_K}{H^1(G_{K_v},E)[n]}\stackrel{\epsilon}{\to} {\mathrm{Sel}^n(E/K)}^*.\]

We can conclude that 
$\#X\le \#\mathrm{Sel}^n(E/K)$ from the following commutative diagram. 

{\footnotesize\begin{tikzcd}[column sep=small, row sep=large]
    0 \arrow[r] & \text{Sel}^n(E/K) \arrow[r] \arrow[d, ->>] & H^1(G_K,E[n]) \arrow[r,"\phi"] \arrow[d, ->>] & 
    \bigoplus_{v\in M_K}{H^1(G_{K_v},E)[n]}    
    \arrow[r, "\epsilon"] \arrow[d, equals] & \text{Sel}^n(E/K)^* \\
    0 \arrow[r] & \Sha(E/K)[n] \arrow[r] & H^1(G_K,E)[n] \arrow[r] & \bigoplus_{v\in M_K} H^1(G_{K_v},E)[n] \arrow[r] & X \arrow[r] & 0 . 
\end{tikzcd}}

Indeed, since $X\cong \mathrm{Coker}\phi$ injects into ${\mathrm{Sel}^n(E/K)}^*$, it follows that $\#X\le \#\mathrm{Sel}^n(E/K)$. 

When $n$ is a prime number, $\mathrm{Ker}\beta=0$ by Theorem \ref{vanish}. Therefore, $\epsilon$ is surjective, hence $\#X=\#\mathrm{Sel}^n(E/K)$.\end{proof}

\subsection{Local vs Global}
 
The local cohomology of elliptic curves at primes of bad reduction (especially those with additive reduction) remains mysterious, but we can calculate it at primes of good reduction. In this section, we prove that we can increase the contributions from local cohomologies while suppressing the contributions from global cohomology.

\renewcommand{\labelenumi}{(\arabic{enumi})}

\begin{proposition}\label{local}
Let $L$ be a quadratic extension of $K$.
Let $v\in M_K$ be a place that is not above $2$ and let $w$ a place of $L$ above $v$. Let $E/K$ be an elliptic curve.
Let $k$ be the residue field of $K_v$ and $\tilde{E}/k$ be the reduction of $E/K$ mod $v$. \par
\begin{enumerate}

\item Suppose $v$ is a finite place of $K$ unramified in $L$, and $E/K$ has good reduction at $v$. Then $H^1(\mathrm{Gal}(L_w/K_v), E(L_w)) = 0$.

\item $H^1(\mathrm{Gal}(L_w/K_v), E(L_w))$ is a finite group. 
\item Suppose $v$ is a finite place of $K$ ramified in $L$, and $E/K$ has good reduction at $v$. Then \[\#H^1(\mathrm{Gal}(L_w/K_v), E(L_w)) = \#\tilde{E}(k)[2].\]

\end{enumerate}
\end{proposition}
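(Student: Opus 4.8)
The plan is to handle the three assertions with a single circle of ideas: for the finiteness statement (2) an inflation–restriction argument combined with local duality, and for (1) and (3) the standard reduction exact sequence of $\mathrm{Gal}(L_w/K_v)$-modules. Throughout I write $G=\mathrm{Gal}(L_w/K_v)$, which is cyclic of order $1$ or $2$, and $p$ for the residue characteristic; the hypothesis that $v$ does not lie above $2$ forces $p$ to be odd, and this is the feature that makes everything work.

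For (2) I would first use that $E(L_w)=E(\overline{K_v})^{G_{L_w}}$, so the inflation–restriction sequence gives an injection $H^1(G,E(L_w))\hookrightarrow H^1(G_{K_v},E)$. Since the cohomology of a finite group is annihilated by its order, the image is killed by $|G|\mid 2$ and therefore lands in $H^1(G_{K_v},E)[2]$. This last group is finite: by the restricted local Tate duality already invoked in the proof of Theorem \ref{Zhe}, it is dual to $E(K_v)/2E(K_v)$, which is finite. This settles (2), and notably uses neither a reduction hypothesis nor ramification behaviour.

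For (1) and (3) I would use the good reduction hypothesis and the exact sequence of $G$-modules
\[ 0 \to \hat{E}(\mathfrak{m}_w) \to E(L_w) \to \tilde{E}(k_w) \to 0, \]
where $\hat{E}(\mathfrak{m}_w)$ is the formal group on the maximal ideal and $k_w$ is the residue field of $L_w$. The key point is that $\hat{E}(\mathfrak{m}_w)$ is a pro-$p$ group, so for $i\ge 1$ the groups $H^i(G,\hat{E}(\mathfrak{m}_w))$ are at once $p$-primary and annihilated by $|G|=2$; as $p$ is odd they vanish, i.e.\ the formal group is cohomologically trivial. The long exact sequence then gives $H^1(G,E(L_w))\cong H^1(G,\tilde{E}(k_w))$. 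For (1): if $v$ splits, $G$ is trivial and the group is $0$; if $v$ is inert, then $G\cong\mathrm{Gal}(k_w/k)$ and $H^1(\mathrm{Gal}(k_w/k),\tilde{E}(k_w))=0$ by Lang's theorem, $\tilde{E}$ being a connected algebraic group over a finite field. For (3): $v$ is ramified, so $k_w=k$ and $G$ acts trivially on $\tilde{E}(k)$; for the cyclic group of order $2$ with trivial action the norm is multiplication by $2$ and $\sigma-1=0$, whence $H^1(G,\tilde{E}(k))=\tilde{E}(k)[2]$, the claimed order.

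The main obstacle I anticipate is making the cohomological triviality of $\hat{E}(\mathfrak{m}_w)$ fully rigorous: one must justify that $H^i(G,-)$ of this pro-$p$ group is $p$-primary, which I would do by writing $\hat{E}(\mathfrak{m}_w)=\varprojlim_j \hat{E}(\mathfrak{m}_w)/\hat{E}(\mathfrak{m}_w^{j})$ with finite $p$-group quotients and using that the cohomology of a finite group commutes with this inverse limit. This is exactly the place where $v\nmid 2$ is indispensable: at $p=2$ the order of $G$ would cease to be invertible on the formal group and neither (1) nor (3) would hold in this shape. The remaining ingredients—Lang's theorem and the trivial-action computation—are routine.
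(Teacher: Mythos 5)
Your proof is correct, but it is organized differently from the paper's. The paper cites Mazur for (1); proves (2) by combining inflation--restriction with local Tate duality to get the isomorphism $H^1(\mathrm{Gal}(L_w/K_v),E(L_w))\cong E(K_v)/\mathrm{tr}(E(L_w))$, whose finiteness it then deduces by embedding $E(K_v)/2E(K_v)$ into $H^1(G_{K_v},E[2])$ and proving the latter finite by Kummer theory; and proves (3) by computing this norm cokernel through the exact sequence $0\to E_1(K_v)/\mathrm{tr}(E_1(L_w))\to E(K_v)/\mathrm{tr}(E(L_w))\to \tilde{E}(k)/2\tilde{E}(k)\to 0$, killing the formal-group term by $2$-divisibility. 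You instead never pass through the norm cokernel: you treat (1) and (3) uniformly by taking $\mathrm{Gal}(L_w/K_v)$-cohomology of $0\to\hat{E}(\mathfrak{m}_w)\to E(L_w)\to\tilde{E}(k_w)\to 0$, using cohomological triviality of the pro-$p$ formal group (the same ``$p$ odd'' mechanism as the paper, used one cohomological degree up) to reduce to $H^1(G,\tilde{E}(k_w))$, which you evaluate by Lang's theorem in the unramified case and by the trivial-action computation $H^1(\mathbb{Z}/2,A)=A[2]$ in the ramified case. This buys you a self-contained proof of (1) rather than a citation, and an actual isomorphism $H^1(\mathrm{Gal}(L_w/K_v),E(L_w))\cong\tilde{E}(k)[2]$ in (3) rather than an equality of orders; what you give up is the explicit description $H^1\cong E(K_v)/\mathrm{tr}(E(L_w))$, which is the form the paper finds convenient (it matches the duality framework already set up for Theorem \ref{Zhe}), and your (2) outsources the finiteness of $E(K_v)/2E(K_v)$ to a standard fact that the paper in effect proves via Kummer theory. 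One small inaccuracy in your closing remark: at residue characteristic $2$ it is only (3) and your particular method that break down; statement (1) itself remains true for unramified extensions with good reduction (this is Mazur's result, proved via the filtration of the formal group with additive quotients and additive Hilbert 90), so the hypothesis $v\nmid 2$ is genuinely needed only for (3).
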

\begin{proof}

(1) See [\cite{Mazur}, Corollary 4.4].\par
(2) We have an inflation-restriction exact sequence: \[0\to H^1(\text{Gal}(L_w/K_v), E(L_w))
\xrightarrow{\mathrm{inf}}
H^1(G_{K_v},E)\xrightarrow{\mathrm{res}}H^1(G_{L_w},E).\]
From Tate duality, the dual of this sequence is \[ E(L_w)\stackrel{\mathrm{tr}}{\to}E(K_v)\to H^1(\mathrm{Gal}(L_w/K_v),E(L_w))^*\to 0.\]
Thus, $H^1(\mathrm{Gal}(L_w/K_v),E(L_w))\cong E(K_v)/\mathrm{tr}(E(L_w))$ holds (see [\cite{Mazur}, Proposition 4.2] for this isomorphism and [\cite{tate}, equation (12)] for the relation ${\mathrm{res}}^*=\mathrm{tr}$).

There exists a surjective map from the weak Mordell--Weil group $\frac{E(K_v)}{2E(K_v)}$ to $\frac{E(K_v)}{\mathrm{tr}(E(L_w))}$. Because $\frac{E(K_v)}{2E(K_v)}\subset H^1(G_{K_v}, E[2])$, it is sufficient to prove $H^1(G_{K_v}, E[2])$ is finite. Let $M=K_v(E[2])$. Then $M/K_v$ is a finite Galois extension. Because $H^1(G_M,E[2])\cong (M^{\times}/{M^{\times}}^2)^2$ is finite and $\#H^1(\text{Gal}(M/K_v), E(M)[2])$ is finite, we see that $H^1(G_{K_v}, E[2])$ is finite because of the exact sequence $0\to H^1(\text{Gal}(M/K_v), E(M)[2])\xrightarrow{\mathrm{inf}}H^1(G_{K_v},E[2])\xrightarrow{\mathrm{res}}H^1(G_{M},E[2])$.\par

(3) From $(2)$, it is sufficient to prove $\#\dfrac{E(K_v)}{\mathrm{tr}(E(L_w))}=\#\tilde{E}(k)[2]$. 
There exists an exact sequence: \[0\to \dfrac{{E}_1(K_v)}{\mathrm{tr}({E}_1(L_w))}\to \dfrac{E(K_v)}{\mathrm{tr}(E(L_w))}\stackrel{\mathrm{reduction}}{\to} \dfrac{\tilde{E}(k)}{2\tilde{E}(k)}\to 0 \] where $E_1$ denotes the kernel of reduction. Note that $\mathrm{reduction}$ is well-defined since $L_w/K_v$ is a ramified extension. 
Let us prove that the group on the left-hand side is trivial.
Indeed, \( E_1(K_v) \cong \hat{E}(M) \), where \( M \) is the maximal ideal of \( O_K \) and \( \hat{E}(M) \) is the group associated with the formal group \( \hat{E} \), which is a $2$-divisible group. For all \( a \in E_1(K_v) \), there exists \( b \in E_1(K_v) \) such that \( a = 2b = \mathrm{tr}(b) \). Then we can conclude that $\#H^1(\mathrm{Gal}(L_w/K_v),E(L_w)))=\#\dfrac{E(K_v)}{\mathrm{tr}(E(L_w))}=\#\dfrac{\tilde{E}(k)}{2\tilde{E}(k)}=\#
\tilde{E}(k)[2]$.\end{proof}

\begin{remark}
When $L/K$ is a quadratic extension and there is a choice of $w \in M_L$ above $v \in M_K$, $v$ splits completely. Since $H^1(\mathrm{Gal}(L_w/K_v), E(L_w)) = 0$ in this case, the choice of $w$ is not an issue. More generally, for a Galois extension $L/K$ of degree $n$, it can be shown that $H^1(\mathrm{Gal}(L_w/K_v), E(L_w))$ is independent of the choice of $w \mid v$. Similarly to (1) and (2) of Proposition \ref{local}, it can be proven that $H^1(\mathrm{Gal}(L_w/K_v), E(L_w))$ is a finite group that vanishes for almost all $v$.
\end{remark}

The following theorem of Qiu \cite{Qiu}, using computations with the Herbrand quotient, implies that the global cohomology $H^1(\mathrm{Gal}(L/K),E(L))$ remains small, provided that $\mathrm{rank}(E/L)=\mathrm{rank}(E/K)+\mathrm{rank}(E_D/K)$ does not grow significantly.

\begin{theorem}\label{Qiu}\par
Let $E/K$ be an elliptic curve over $K$. Let $L=K(\sqrt{D})$ be a quadratic extension of $K$. Let $\sigma$ be a generator of $\mathrm{Gal}(L/K)$.
We define $\mathrm{tr}: E(L)\to E(K)$ by $P\mapsto P+P^{\sigma}$. Then, \[\#H^1(\mathrm{Gal}(L/K),E(L))=\#\mathrm{Coker}(\mathrm{tr})\times 2^{\mathrm{rank}(E_D/K)-\mathrm{rank}(E/K)}\] holds. 
In particular, $\#H^1(\mathrm{Gal}(L/K),E(L))\le 2^{\mathrm{rank}(E_D/K)}\times \#
E(K)[2]$
\end{theorem}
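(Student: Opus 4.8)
The plan is to exploit that $G \coloneqq \mathrm{Gal}(L/K)$ is cyclic of order $2$, so its cohomology is periodic of period $2$ and everything is governed by the Herbrand quotient. Writing $\sigma$ for the generator, the trace map $\mathrm{tr} = 1 + \sigma$ is precisely the norm map $N_G$ for $G$ acting on $A \coloneqq E(L)$. First I would record the two relevant Tate cohomology groups: since $A^G = E(K)$ and $N_G A = \mathrm{tr}(E(L))$, we have $\hat{H}^0(G, A) = E(K)/\mathrm{tr}(E(L)) = \mathrm{Coker}(\mathrm{tr})$, while periodicity gives $H^1(G, A) = \hat{H}^1(G, A) = \hat{H}^{-1}(G, A)$. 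Consequently the Herbrand quotient satisfies
\[
q(A) = \frac{\#\hat{H}^0(G, A)}{\#\hat{H}^{-1}(G, A)} = \frac{\#\mathrm{Coker}(\mathrm{tr})}{\#H^1(\mathrm{Gal}(L/K), E(L))},
\]
so that the theorem reduces to the single computation $q(E(L)) = 2^{\mathrm{rank}(E/K) - \mathrm{rank}(E_D/K)}$.

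To compute $q(E(L))$ I would use that the Herbrand quotient equals $1$ on finite modules, is multiplicative in short exact sequences, and hence depends only on the $\Bbb{Q}[G]$-isomorphism class of the lattice $E(L)/E(L)_{\mathrm{tors}}$. Over $\Bbb{Q}$ the only indecomposable $G$-modules are the trivial representation $\Bbb{Q}_+$ and the sign representation $\Bbb{Q}_-$, and a direct computation on the rank-one lattices gives $q(\Bbb{Z}_+) = 2$ and $q(\Bbb{Z}_-) = 1/2$. Thus if $E(L) \otimes \Bbb{Q} \cong \Bbb{Q}_+^{\oplus r_+} \oplus \Bbb{Q}_-^{\oplus r_-}$ then $q(E(L)) = 2^{r_+ - r_-}$. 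Here $r_+$ is the rank of the $\sigma$-invariants $E(L)^{\sigma} = E(K)$, namely $\mathrm{rank}(E/K)$. For the anti-invariants I would use the fixed isomorphism $\tau \colon E(L) \cong E_D(L)$, $(x,y) \mapsto (x, y/\sqrt{D})$: a short computation using $\sigma(\sqrt{D}) = -\sqrt{D}$ shows $(\tau P)^{\sigma} = -\tau(P^{\sigma})$, so $\tau$ carries the $(-1)$-eigenspace $\{P : P^{\sigma} = -P\}$ isomorphically onto $E_D(L)^{\sigma} = E_D(K)$. Hence $r_- = \mathrm{rank}(E_D/K)$ and $q(E(L)) = 2^{\mathrm{rank}(E/K) - \mathrm{rank}(E_D/K)}$, which together with the displayed identity yields the main formula.

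For the final inequality I would note that $\mathrm{tr}$ restricted to $E(K)$ is multiplication by $2$ (since $P^{\sigma} = P$ there), so $2E(K) \subseteq \mathrm{tr}(E(L))$ and there is a surjection $E(K)/2E(K) \twoheadrightarrow \mathrm{Coker}(\mathrm{tr})$. Combining $\#\mathrm{Coker}(\mathrm{tr}) \le \#\bigl(E(K)/2E(K)\bigr) = 2^{\mathrm{rank}(E/K)}\,\#E(K)[2]$ with the main formula gives $\#H^1(\mathrm{Gal}(L/K), E(L)) \le 2^{\mathrm{rank}(E_D/K)}\,\#E(K)[2]$.

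I expect the main obstacle to be the Herbrand-quotient step, and within it the precise identification of the $(-1)$-eigenspace with $E_D(K)$: one must track the $\sigma$-action through $\tau$ carefully (the sign arises because $\sqrt{D}$ is anti-invariant), and one must justify that passing to $E(L) \otimes \Bbb{Q}$ and to rank-one lattices leaves $q$ unchanged, i.e. the invariance of the Herbrand quotient under finite kernels and cokernels together with its dependence only on the underlying rational representation. The remaining ingredients (cyclic periodicity, the norm computation, and the $2$-divisibility count) are routine.
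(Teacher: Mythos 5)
Your proof is correct, but it takes a genuinely more self-contained route than the paper: for the main identity the paper offers no argument of its own and simply cites Qiu's Theorem 1.5, whereas your Herbrand-quotient computation actually proves it. Each step of your reduction is sound: $\hat{H}^0(G,E(L)) = E(K)/\mathrm{tr}(E(L)) = \mathrm{Coker}(\mathrm{tr})$; periodicity of Tate cohomology for the cyclic group $G$ identifies $H^1$ with $\hat{H}^{-1}$; the Herbrand quotient is multiplicative, trivial on finite modules (here one uses Mordell--Weil to know $E(L)_{\mathrm{tors}}$ is finite), and hence depends only on $E(L)\otimes\Bbb{Q}$ as a $\Bbb{Q}[G]$-representation; the eigenspace identification $r_+ = \mathrm{rank}(E/K)$ and $r_- = \mathrm{rank}(E_D/K)$ via $\tau$ is exactly right, the sign in $(\tau P)^{\sigma} = -\tau(P^{\sigma})$ coming from $\sigma(\sqrt{D}) = -\sqrt{D}$ together with the fact that $\tau$ is a group isomorphism; and $q(\Bbb{Z}_+)=2$, $q(\Bbb{Z}_-)=1/2$ then give $q(E(L)) = 2^{\mathrm{rank}(E/K)-\mathrm{rank}(E_D/K)}$, which is precisely the displayed formula. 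For the ``in particular'' clause you argue exactly as the paper does: $\mathrm{tr}$ restricted to $E(K)$ is multiplication by $2$, so $E(K)/2E(K)$ surjects onto $\mathrm{Coker}(\mathrm{tr})$ and $\#\mathrm{Coker}(\mathrm{tr}) \le 2^{\mathrm{rank}(E/K)}\,\#E(K)[2]$. What your route buys is independence from the external reference, plus a conceptual explanation of why the twisted rank appears at all --- it is the multiplicity of the sign representation in $E(L)\otimes\Bbb{Q}$; what the paper's route buys is brevity, delegating the cyclic-cohomology bookkeeping to Qiu.
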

\begin{proof}
For the former, see [\cite{Qiu}, Theorem 1.5].
The latter immediately follows from the fact that there is a surjection from $E(K)/2E(K)$ to $\mathrm{Coker}(\mathrm{tr})$. \end{proof}

The following theorem by Rohrlich ensures that we can make $\mathrm{rank}(E_D/K)$ small when $K=\Bbb{Q}$.

\begin{theorem}[\mbox{\rm\textit{cf.}}\ J.Hoffstein and W.Luo  appendixed by Rohrlich \cite{H}]\label{H}
 For any elliptic curve $A$ over $\Bbb{Q}$, there exist infinitely many square-free integers $D\in \Bbb{Z}$ such that $\mathrm{rank}(A_D/\Bbb{Q})=0$ and the number of prime factors of $D$ is no greater than $4$. 
\end{theorem}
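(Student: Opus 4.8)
The plan is to reduce the assertion about ranks to a nonvanishing statement for central $L$-values, and then to extract twists with few prime factors by a sieve. First I would invoke the modularity theorem to attach to $A/\Bbb{Q}$ a weight-$2$ newform $f$ of level $N=N_A$, so that for a fundamental discriminant $D$ coprime to $N$ the quadratic twist $A_D$ corresponds to the twisted form $f\otimes\chi_D$, where $\chi_D$ is the Kronecker character. By the theorem of Kolyvagin, which applies thanks to modularity, the nonvanishing $L(A_D,1)\neq 0$ forces $\mathrm{ord}_{s=1}L(A_D,s)=0$ to equal the Mordell--Weil rank, so that $\mathrm{rank}(A_D/\Bbb{Q})=0$ (and $\Sha(A_D/\Bbb{Q})$ is finite). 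Thus it suffices to produce infinitely many square-free $D$ with at most four prime factors for which $L(A_D,1)\neq 0$.

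The second step is to fix the sign of the functional equation. The completed $L$-function satisfies $\Lambda(A_D,s)=w(D)\,\Lambda(A_D,2-s)$ with a root number $w(D)\in\{\pm1\}$ that depends, through explicit local factors, only on the sign of $D$ and on the class of $D$ modulo a fixed modulus built from $N$ and the archimedean place. I would restrict attention to a single residue class, say $D\equiv a \pmod M$, on which $w(D)=+1$; on such a class $L(A_D,1)$ is not forced to vanish by parity, and the problem becomes genuinely one of nonvanishing rather than of sign.

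The analytic heart, following Hoffstein--Luo, is a \emph{first-moment} estimate. Using the approximate functional equation one writes $L(A_D,1)$ as a rapidly converging Dirichlet series and averages it over fundamental discriminants $D\le X$ in the chosen progression against a smooth cutoff. The diagonal terms contribute a main term of size $\asymp X$, a positive constant times the number of admissible $D$, while the off-diagonal contribution is controlled by Poisson summation and estimates for the relevant character sums and is of strictly smaller order. A positive lower bound for this average then guarantees $L(A_D,1)\neq 0$ for a positive proportion of such $D$, in particular for infinitely many.

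The final and most delicate step --- the content of Rohrlich's appendix --- is to impose the constraint that $D$ have at most four prime factors while preserving nonvanishing. Since the set of square-free integers with boundedly many prime divisors has density zero, one cannot merely intersect it with a positive-proportion set; instead I would run the moment computation against a sieve weight (a Selberg- or $\beta$-sieve majorant/minorant) supported on integers with few prime factors, arranging matters so that the sieve main term still dominates the error in the twisted average. The constant $4$ reflects exactly the quality of sieve one can afford against the available moment and off-diagonal bounds. I expect this interaction between the moment method and the restriction to almost-primes to be the main obstacle, since the sieve and the character-sum error term must be balanced with care; by comparison the earlier reductions (modularity, Kolyvagin, and fixing the root number) are essentially formal.
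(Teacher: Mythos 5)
The paper offers no proof of this statement at all: it is quoted as a black box from Hoffstein--Luo's paper with Rohrlich's appendix \cite{H}, which is precisely the source and the strategy you describe. Your outline --- modularity to attach a newform, Kolyvagin's theorem to reduce $\mathrm{rank}(A_D/\Bbb{Q})=0$ to $L(A_D,1)\neq 0$, fixing the root number on congruence classes, and the first-moment argument run against a combinatorial sieve weight to force at most four prime factors --- is a faithful reconstruction of that cited proof, so it matches the approach the paper relies on (the only slip, that a first moment alone yields a positive proportion of nonvanishing twists, is not load-bearing since your final sieve-weighted argument never uses it).
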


\begin{proposition}\label{g(D)}
 Let $K$ be a number field. Suppose that there exists a quadratic extension $K(\sqrt{d})$ of $K$ such that $\mathrm{rank}(A_d/K)=0$ for arbitrary elliptic curve $A$ over $K$ and the number of prime factors of $d$ is no greater than some constant $a$. 
   Then for an arbitrary integer $r \in \Bbb{Z}$ and arbitrary elliptic curve $E/K$, there exist infinitely many quadratic extensions $L=K(\sqrt{D})/K$ such that 
 
 \[g(D)\coloneqq \dfrac{\#\bigoplus_{v\in M_K}H^1(\mathrm{Gal}(L_w/K_v),E(L_w))}{\#H^1(\mathrm{Gal}(L/K), E(L))}\ge r.\]

\end{proposition}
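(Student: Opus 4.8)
The plan is to engineer $D$ so that the numerator $\#\bigoplus_{v\in M_K}H^1(\mathrm{Gal}(L_w/K_v),E(L_w))$ is forced large by many ramified primes of good reduction, while the denominator $\#H^1(\mathrm{Gal}(L/K),E(L))$ stays bounded because the twist $E_D$ is kept of rank $0$. By Proposition \ref{local}, the only primes contributing nontrivially to the numerator are the bad primes, the primes above $2$, and the primes ramified in $L$; and at a good prime $v\nmid 2$ ramified in $L$ the local contribution is exactly $\#\tilde{E}(k)[2]$. So I first want good primes with $\#\tilde{E}(k)[2]=4$. Applying the Chebotarev density theorem to $K(E[2])/K$, there are infinitely many primes $q$ of $K$, of good reduction and not above $2$, that split completely in $K(E[2])$; for each such $q$ the Frobenius acts trivially on $E[2]$, so $E[2]\subset\tilde{E}(k_q)$ and hence $\#\tilde{E}(k_q)[2]=4$.

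Fix a large integer $N$ and distinct such primes $q_1,\dots,q_N$, and set $D_0=q_1\cdots q_N$. To control the denominator I apply the hypothesis not to $E$ but to the twist $A=E_{D_0}$: there is a square-free $d$ with at most $a$ prime factors and $\mathrm{rank}((E_{D_0})_d/K)=0$. By multiplicativity of quadratic twisting, $(E_{D_0})_d=E_{D_0 d}$, so $\mathrm{rank}(E_{D_0 d}/K)=0$. I then set $D=D_0 d$ and $L=K(\sqrt{D})$. For the denominator, Theorem \ref{Qiu} gives
\[\#H^1(\mathrm{Gal}(L/K),E(L))\le 2^{\mathrm{rank}(E_D/K)}\cdot\#E(K)[2]=\#E(K)[2],\]
a constant depending only on $E/K$ (and at most $4$). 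For the numerator, among $q_1,\dots,q_N$ at most $a$ can divide $d$; every $q_i$ not dividing $d$ appears to odd order in the square-free part of $D$, hence ramifies in $L$, and by Proposition \ref{local}(3) contributes a factor $\#\tilde{E}(k_{q_i})[2]=4$. All remaining places contribute a factor $\ge 1$ by parts (1) and (2) of Proposition \ref{local}, so the numerator is at least $4^{N-a}$. Therefore
\[g(D)\ge\frac{4^{N-a}}{\#E(K)[2]}\ge 4^{N-a-1},\]
which exceeds $r$ once $N$ is large enough. Letting $N\to\infty$ and varying the chosen split primes produces $D$ with unboundedly many prime factors, hence infinitely many distinct quadratic fields $L=K(\sqrt{D})$.

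The main obstacle is the tension between the two directions: enlarging the numerator demands many prime factors of $D$, whereas bounding the denominator demands that $E_D$ not gain rank. The resolution is to apply the rank-$0$ hypothesis to $E_{D_0}$ rather than to $E$, so that the auxiliary twist $d$ contributes only a bounded number ($\le a$) of extra prime factors, negligible against $N$; the bookkeeping that at most $a$ of the $q_i$ are absorbed into $d$ (and so may fail to ramify) is precisely what keeps the lower bound $4^{N-a}$ valid. A minor point to verify along the way is that the $q_i$ may be taken odd and coprime to the conductor of $E$, so that Proposition \ref{local}(3) applies verbatim, and that distinct choices of the split primes yield $D$ with distinct square classes, giving genuinely infinitely many fields.
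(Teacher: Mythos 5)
Your proposal follows essentially the same strategy as the paper's proof: choose many primes of good reduction, prime to $2$, at which $E[2]$ is fully rational (so that Proposition \ref{local}(3) yields a local factor of $4$ at each one that ramifies), apply the rank-zero hypothesis to the \emph{twisted} curve $E_{D_0}$ so that the auxiliary twist $d$ costs at most $a$ prime factors, bound the denominator by Theorem \ref{Qiu} using $\mathrm{rank}(E_D/K)=0$, and conclude $g(D)\ge 4^{N-a}/\#E(K)[2]$. Indeed, your explicit retention of the factor $\#E(K)[2]\le 4$ is slightly more careful than the paper's reduction step.

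There is, however, one genuine gap. You invoke Chebotarev only for $K(E[2])/K$ and then set $D_0=q_1\cdots q_N$, treating the primes $q_i$ of $K$ as elements that can be multiplied. For a general number field $K$ (and the proposition is stated for arbitrary $K$), a prime of $K$ is a prime ideal and need not be principal; the product $q_1\cdots q_N$ is then merely an ideal, so neither the quadratic twist $E_{D_0}$ nor the field $K(\sqrt{D_0 d})$ is defined, since twists and quadratic extensions are parametrized by classes in $K^{\times}/(K^{\times})^{2}$, not by ideals. This is precisely why the paper demands that the $v_i$ split completely in the \emph{Hilbert class field} of $K(E[2])$: writing $H_K$ for the Hilbert class field of $K$, the compositum $H_K K(E[2])$ is abelian and everywhere unramified over $K(E[2])$, hence contained in the Hilbert class field of $K(E[2])$; so a prime splitting completely there splits completely in $H_K$ and is therefore principal, and one may take genuine prime \emph{elements} $v_i\in O_K$ to form the element $D=v_1\cdots v_R D_R$. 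Your argument is repaired by adding this (still positive-density, hence satisfiable) splitting condition; with that amendment, the rest of your bookkeeping — at most $a$ of the chosen primes absorbed into $d$, odd valuation hence ramification at the others, and the infinitude of resulting square classes — goes through as written.
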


\begin{proof}

 By Theorem \ref{Qiu}, it is sufficient to prove there exists infinitely many $D$ such that 
 \[\dfrac{\#\bigoplus_{v\in M_K}H^1(\mathrm{Gal}(L_w/K_v),E(L_w))}{2^{\text{rank}(E_D/K)}}\ge r.\]

 For an arbitrary $r$, there exists an integer $R$ such that $4^{R-a} \geq r$. For this $R$, there exist prime elements $v_i (1\le i\le R)$ of $O_K$ such that 
 \begin{enumerate}
 \item principal ideals $(v_i)$ split completely in $K(E[2])$.
 \item $E/K$ has good reduction at $v_i$, and $v_i$ does not lie above $2$.
 \end{enumerate}
 
Indeed, the prime ideals of $O_K$ that split completely in the Hilbert class field  $K^H$ of $K$ are precisely the principal prime ideals of $O_K$, and we can take infinitely many prime elements $v \in O_K$ that satisfy conditions (1) and (2) above by applying Chebotarev's density theorem to $K^HK(E[2])$. For such a prime element $v$ of $O_K$, $K_{v}(E[2]) =K_{v}$ since $v$ splits completely in $K(E[2])$, thus $E(\overline{K_v})[2]=E(K_v)[2]$ injects into $\tilde{E}(k_{v})[2]$, and therefore $\#
\tilde{E}(k_{v})[2]=4$ where $k_v$ is the residue field of $K_v$. Thus if $v$ is a ramified prime element of a quadratic extension $L=K(\sqrt{D})$ of $K$, $\#H^1(\mathrm{Gal}(L_w/K_v),E(L_w))=4$ holds by Proposition \ref{local} (3). By setting $A = E_{v_1 v_2 \cdots v_R}$, there exists a quadratic extension $K(\sqrt{D_R})/K$ such that 
$\text{rank}(E_{v_1 v_2 \cdots v_R D_R}/K) = 0$ and the number of prime factors of $D_R$ is no greater than $a$. 
 Let us take  $D$ as $D=v_1\cdots v_R D_R$. Because the number of prime factors of $D_R$ is no greater than $a$, the number of ramified prime elements of $K$ under quadratic extension $L=K(\sqrt{v_1 v_2 \cdots v_R D_R})/K$ which satisfies condition $(1),(2)$ is $R-a$ or more. Thus, taking $L = K(\sqrt{v_1 v_2 \cdots v_R D_R})$,

\[
\dfrac{\#\bigoplus_{v\in M_K} H^1(\mathrm{Gal}(L_w/K_v), E(L_w))}{2^{\operatorname{rank}(E_D/K)}}
\ge \dfrac{4^{R-a}}{2^{\operatorname{rank}(E_D/K)}} \ge \dfrac{4^{R-a}}{2^0} \ge r.\ \qedhere
\]
\end{proof}

\begin{remark} \label{localglobal}
In particular, when $K=\Bbb{Q}$, Theorem \ref{H} implies that for all $r \in \Bbb{Z}$ and all elliptic curves $E/K$, there exist infinitely many square-free integers $D$ such that $g(D)\ge r$. If we can determine that $\mathrm{rank}(E_D/K)$ does not grow significantly compared to $4^{\omega(\Delta_{E_D})}$, where $\omega(\Delta_{E_D})$ is the number of prime factors of $\Delta_{E_D}$, this result can be generalized to cases where $K \neq \mathbb{Q}$.
\end{remark}

\section{Increasing $\Sha(E/\Bbb{Q}(\sqrt{D}))[2]$ and $\Sha(E_D/\Bbb{Q})[2]$ simultaneously}
\subsection{Trace and twist}
 In this section, we investigate the relationship between $\mathrm{tr}(\Sha(E/L)[2])$ and $\#\Sha(E_D/K)[2]$. The Galois group naturally acts on $\Sha(E/L)$ by acting on the coefficients of torsors, and we can explicitly calculate this action cohomologically.

\begin{definition}[$\mathrm{Gal}(L/K)$ acts on $\Sha(E/L)$]\label{def1}
Let $G$ be a group and $H$ be a normal finite index subgroup of $G$. Let $M$ be a $G$-module. 
$G/H$ acts on $H^1(H, M)$ by $(\overline{g} * X)(h) = gX(g^{-1}hg)$. Here, $g$ is a lift of $\bar{g} \in G/H$ in $G$, and $gX(g^{-1}hg)$ does not depend on the lift modulo coboundary.

When \( G = G_K \) and \( H = G_L \) and $M=E$, the Galois group \( \mathrm{Gal}(L/K) \cong G/H \) acts on \( \Sha(E/L)[2] \subset H^1(G_L, E) \) as described above.
\end{definition}

\begin{definition}[Corestriction and trace]  

In the case $[G:H]=2$, 
let $\sigma$ be a generator of $G/H$. There is a map $\mathrm{cor} : H^1(H,M)\to H^1(G,M)$ called corestriction  which satisfies  $\mathrm{cor}\circ \mathrm{res}=[2]$ and  $\mathrm{tr}=\mathrm{res}\circ \mathrm{cor}$ where  $\mathrm{tr} : H^1(H,M)\to H^1(H,M)$ be a map defined by $X\mapsto  X+\sigma*X$.

 Explicitly, $\mathrm{cor} : H^1(H,M)\to H^1(G,M)$ can be expressed as follows.

{\footnotesize
\[
H^1(H, M) \to H^1(G, M) \text{ is given by } X \mapsto \left[ 
g \mapsto 
\begin{cases}
X(g) + (\sigma * X)(g) & \text{if } g \in H, \\
X(g \sigma) + (\sigma * X)(g \sigma) & \text{if } g \in G - H
\end{cases}
\right].\]}

This is indeed a trace map if we restrict it to $H^1(H,M)$.
When $G=G_K, H=G_L, M=E$, the map $\mathrm{cor}: H^1(G_L,E)\to H^1(G_K,E)$ and $\mathrm{tr}: H^1(G_L,E)\to H^1(G_L,E)$ induce maps $\mathrm{cor}: \Sha(E/L)[2]\to \Sha(E/K)[2]$ and $\mathrm{tr}: \Sha(E/L)[2]\to \Sha(E/L)[2]$.

 \end{definition}

\begin{proposition}\label{tr and cor}
 $\#\mathrm{tr}(\Sha(E/L)[2])\ge \dfrac{\#\mathrm{cor}(\Sha(E/L)[2])}{\#H^1(\mathrm{Gal}(L/K),E(L))}$.

\end{proposition}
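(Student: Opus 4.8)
The plan is to exploit the factorization $\mathrm{tr} = \mathrm{res} \circ \mathrm{cor}$ recorded in the preceding definition, together with the inflation--restriction exact sequence, which identifies the kernel of $\mathrm{res}\colon H^1(G_K,E) \to H^1(G_L,E)$ with $H^1(\mathrm{Gal}(L/K),E(L))$. The whole inequality will then fall out of the first isomorphism theorem applied to a single restriction map.

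First I would observe that, since $\mathrm{tr} = \mathrm{res} \circ \mathrm{cor}$, we have $\mathrm{tr}(\Sha(E/L)[2]) = \mathrm{res}\big(\mathrm{cor}(\Sha(E/L)[2])\big)$. Writing $A \coloneqq \mathrm{cor}(\Sha(E/L)[2]) \subseteq \Sha(E/K)[2] \subseteq H^1(G_K,E)$, this says that the restriction map, viewed as a homomorphism out of the finite group $A$, surjects onto $\mathrm{tr}(\Sha(E/L)[2])$. Next I would compute the kernel of $\mathrm{res}|_A$. The inflation--restriction sequence
\[
0 \to H^1(\mathrm{Gal}(L/K),E(L)) \xrightarrow{\mathrm{inf}} H^1(G_K,E) \xrightarrow{\mathrm{res}} H^1(G_L,E)
\]
shows that $\mathrm{Ker}(\mathrm{res}) \cong H^1(\mathrm{Gal}(L/K),E(L))$, so that $\mathrm{Ker}(\mathrm{res}|_A) = A \cap \mathrm{Ker}(\mathrm{res})$ has order at most $\#H^1(\mathrm{Gal}(L/K),E(L))$. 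Applying the first isomorphism theorem to $\mathrm{res}|_A$ then yields
\[
\#\mathrm{tr}(\Sha(E/L)[2]) = \frac{\#A}{\#(A \cap \mathrm{Ker}(\mathrm{res}))} \ge \frac{\#\mathrm{cor}(\Sha(E/L)[2])}{\#H^1(\mathrm{Gal}(L/K),E(L))},
\]
which is exactly the claimed bound.

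I do not expect a serious obstacle here: the argument is formal once one recognizes the factorization $\mathrm{tr} = \mathrm{res} \circ \mathrm{cor}$ and reads off $\mathrm{Ker}(\mathrm{res})$ from inflation--restriction. The only points that genuinely require care are that $\mathrm{cor}$ really carries $\Sha(E/L)[2]$ into $\Sha(E/K)[2]$ — so that $A$ lands in the global $H^1(G_K,E)$ where inflation--restriction is available — and that the relevant restrictions of $\mathrm{cor}$, $\mathrm{res}$, and $\mathrm{tr}$ to the Tate--Shafarevich groups are well defined; both are already guaranteed by the preceding definition, so the proof is essentially a diagram-free bookkeeping of orders.
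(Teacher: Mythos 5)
Your proposal is correct and is essentially identical to the paper's own proof: both use the factorization $\mathrm{tr}=\mathrm{res}\circ\mathrm{cor}$ to view $\mathrm{res}$ as a surjection from $\mathrm{cor}(\Sha(E/L)[2])$ onto $\mathrm{tr}(\Sha(E/L)[2])$, bound the kernel by $H^1(\mathrm{Gal}(L/K),E(L))$ via inflation--restriction, and conclude with the first isomorphism theorem. Your write-up merely spells out the intersection $A\cap\mathrm{Ker}(\mathrm{res})$ explicitly, which the paper leaves implicit.
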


\begin{proof}
Since \[\mathrm{tr}=\mathrm{res}\circ \mathrm{cor},\] there exists a map $\mathrm{cor}(\Sha(E/L)[2])\stackrel{\mathrm{res}}{\to} \mathrm{tr}(\Sha(E/L)[2])$.
By the inflation-restriction sequence, $\mathrm{Ker}(\mathrm{res})$ is contained in $H^1(\mathrm{Gal}(L/K), E(L))$. By applying the first isomorphism theorem to $\mathrm{res}$, we obtain the inequality.
\end{proof}

\begin{theorem}\label{Yu}
 Let
\begin{align*}
\langle\ ,\rangle_K &: \Sha(E/K)[2]\times \Sha(E/K)[2]\to \mathbb{Z}/2\mathbb{Z},
\\
\langle\ ,\rangle_L &: \Sha(E/L)[2]\times \Sha(E/L)[2]\to \mathbb{Z}/2\mathbb{Z}
\end{align*}
be the Cassels--Tate pairing. Then,
\begin{itemize}
\item[(1)] The kernel on each side is $\Sha(E/K)[2]\cap 2\Sha(E/K)=2\Sha(E/K)[4]$ and $2\Sha(E/L)[4]$, respectively.

\item[(2)]$\langle a, \mathrm{cor}(a')\rangle_K=\langle  \mathrm{res}(a),a'\rangle_L$.

\end{itemize}
 
\end{theorem}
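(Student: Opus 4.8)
I would treat the two assertions separately: (1) is the standard nondegeneracy of the Cassels--Tate pairing modulo its divisible radical, whereas (2) is a restriction--corestriction adjunction that requires unwinding the definition of the pairing.

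\textbf{Part (1).} First I would observe that $\Sha(E/K)[2]\cap 2\Sha(E/K)=2\Sha(E/K)[4]$ is a purely formal identity of abelian groups: if $a=2b$ with $2a=0$ then $4b=0$, so $a\in 2\Sha(E/K)[4]$, and the reverse inclusion is obvious. Thus the only content is that the radical of $\langle\ ,\rangle_K$ on $\Sha(E/K)[2]$ equals $\Sha(E/K)[2]\cap 2\Sha(E/K)$. One inclusion is immediate from bilinearity: for $a=2c\in\Sha(E/K)[2]$ and any $b\in\Sha(E/K)[2]$ one has $\langle a,b\rangle_K=\langle c,2b\rangle_K=0$. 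For the reverse inclusion I would pass to the $2$-primary part $T\coloneqq\Sha(E/K)[2^{\infty}]$. Since $\Sha(E/K)[2]$ is finite, $T$ is cofinitely generated and splits as $T\cong(\mathbb{Q}_2/\mathbb{Z}_2)^{r}\oplus F$ with $F$ finite; here the divisible part $T_{\mathrm{div}}=(\mathbb{Q}_2/\mathbb{Z}_2)^{r}$ is exactly the radical of the full pairing, by the standard fact that $\langle\ ,\rangle_K$ induces a perfect pairing on $\Sha/\Sha_{\mathrm{div}}$ (see Appendix 2 of \cite{Ca}, or \cite{mil}). On the finite group $F$ the induced pairing is perfect, so its annihilators satisfy $F[2]^{\perp}=2F$ (both have order $\#F/\#F[2]$ and $2F\subseteq F[2]^{\perp}$); hence the radical of the restricted pairing on $F[2]$ is $F[2]\cap 2F$. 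Assembling the divisible and finite parts, and using $T_{\mathrm{div}}[2]\subseteq 2T_{\mathrm{div}}$, gives radical $=T[2]\cap 2T$, which equals $\Sha(E/K)[2]\cap 2\Sha(E/K)$ because $\Sha(E/K)$ is torsion. The same argument applies verbatim over $L$.

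\textbf{Part (2).} Here I would use the description of the Cassels--Tate pairing as a sum of local invariants: built from the Weil pairing $e_2\colon E[2]\times E[2]\to\mu_2$ and local Tate duality, the pairing takes the schematic form $\langle x,y\rangle=\sum_{v}\mathrm{inv}_v(\,\cdot\,)$, where each local term is the invariant of a local cup product formed from a global lift of $x$ to $H^1(\cdot,E[2])$ and a local trivialisation of $y$ (see Appendix 2 of \cite{Ca}, \cite{mil}, and \cite{tate}). The identity $\langle a,\mathrm{cor}(a')\rangle_K=\langle\mathrm{res}(a),a'\rangle_L$ is then the adjunction between the restriction $H^1(G_K,-)\to H^1(G_L,-)$ and the corestriction $H^1(G_L,-)\to H^1(G_K,-)$, and I would deduce it from three standard compatibilities: the projection formula $\mathrm{cor}(\mathrm{res}(\xi)\cup\eta)=\xi\cup\mathrm{cor}(\eta)$ for cup products; the localisation formula $\mathrm{res}_v\circ\mathrm{cor}_{L/K}=\sum_{w\mid v}\mathrm{cor}_{L_w/K_v}\circ\mathrm{res}_w$; and the fact that local corestriction preserves invariants, $\mathrm{inv}_{K_v}\circ\mathrm{cor}_{L_w/K_v}=\mathrm{inv}_{L_w}$. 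Concretely, starting from $\langle a,\mathrm{cor}(a')\rangle_K=\sum_{v}\mathrm{inv}_{K_v}(\cdots)$, I would localise the corestriction, move $\mathrm{res}_v(a)$ across the local cup product by the projection formula, and collapse each group $\{w\mid v\}$ via invariant-preservation; the resulting sum over all places $w$ of $L$ is precisely $\langle\mathrm{res}(a),a'\rangle_L$.

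The main obstacle is the definitional bookkeeping in Part (2): the pairing is defined only after choosing global lifts of the classes to $H^1(\cdot,E[2])$ and, at each place, local points trivialising the image in $H^1(\cdot,E)$, and one must verify these auxiliary choices can be made compatibly under restriction and corestriction so that the projection formula applies termwise. In particular I would need to check that the cohomological corestriction $\mathrm{cor}\colon\Sha(E/L)[2]\to\Sha(E/K)[2]$ defined earlier is the one entering the pairing, and that the local trivialising points transform correctly under $\mathrm{cor}_{L_w/K_v}$; once this compatibility is in place, the invariant manipulations are routine consequences of local class field theory.
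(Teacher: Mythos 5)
The paper does not actually prove this theorem: its ``proof'' is a list of citations --- [\cite{tate}, Theorem 3.2] (see also [\cite{Wuthrich}, Theorem 15]) for (1), and [\cite{Yu}, Theorem 8] for (2). Your proposal therefore takes a genuinely different route simply by supplying an argument. For (1), your derivation is correct and complete modulo the one deep input, namely Tate's theorem that the kernel of the full pairing $\Sha\times\Sha\to\mathbb{Q}/\mathbb{Z}$ is the maximal divisible subgroup (this is exactly what the paper's citation covers): the splitting $\Sha(E/K)[2^{\infty}]\cong(\mathbb{Q}_2/\mathbb{Z}_2)^{r}\oplus F$ with $F$ finite is legitimate because $\Sha(E/K)[2]$ is finite, the order count giving $F[2]^{\perp}=2F$ is correct, and reassembling yields the kernel $\Sha(E/K)[2]\cap 2\Sha(E/K)=2\Sha(E/K)[4]$ without any finiteness hypothesis on $\Sha$ --- a clean, self-contained reduction that the paper leaves implicit in its references. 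For (2), your three ingredients --- the projection formula $\mathrm{cor}(\mathrm{res}(\xi)\cup\eta)=\xi\cup\mathrm{cor}(\eta)$, the localisation formula $\mathrm{res}_v\circ\mathrm{cor}_{L/K}=\sum_{w\mid v}\mathrm{cor}_{L_w/K_v}\circ\mathrm{res}_w$, and the invariant compatibility $\mathrm{inv}_{K_v}\circ\mathrm{cor}_{L_w/K_v}=\mathrm{inv}_{L_w}$ --- are precisely the ingredients of the proof in the cited work of Yu, so in substance you are reproving the cited result rather than diverging from it. What your write-up buys is that the theorem becomes self-contained given Tate's nondegeneracy theorem and local class field theory; what the paper's citation buys is that the bookkeeping you honestly flag --- well-definedness of the pairing under the choices of global lifts and local trivialising points, and the compatibility of those local points with the norm map (e.g.\ that the Kummer map commutes with corestriction, so that $N_{L_w/K_v}(P'_w)$ trivialises $\mathrm{cor}(a')$ at $v$) --- is exactly where the labour in Yu's theorem lies, and in your outline that verification is described but not carried out.
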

\begin{proof}
For (1), see [\cite{tate}, Theorem 3.2]. See also [\cite{Wuthrich}, Theorem 15]. For (2), see [\cite{Yu},Theorem  8]. 
\end{proof}

\begin{cor}\label{cores}
  \[\#\mathrm{cor}(\Sha(E/L)[2])\ge \dfrac{\#\Sha(E/K)[2]}{\#2 \Sha(E/L)[4]\# H^1(\mathrm{Gal}(L/K), E(L))}.\]

\end{cor}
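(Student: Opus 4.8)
The plan is to view both sides through the Cassels--Tate pairing and to reduce the inequality to a counting argument for the orthogonal complement of $\mathrm{cor}(\Sha(E/L)[2])$. Write $V\coloneqq \Sha(E/K)[2]$ and $W\coloneqq \mathrm{cor}(\Sha(E/L)[2])\subseteq V$; both are finite $\mathbb{F}_2$-vector spaces. By Theorem \ref{Yu} (1) the pairing $\langle\ ,\rangle_K$ on $V$ has radical $R\coloneqq 2\Sha(E/K)[4]$, so it descends to a nondegenerate pairing on $\overline{V}\coloneqq V/R$. The goal is to show that $\#W$ is large, and the key point is to identify the orthogonal complement $W^{\perp}\coloneqq\{a\in V: \langle a,w\rangle_K=0 \text{ for all } w\in W\}$ explicitly.

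First I would use the adjunction formula of Theorem \ref{Yu} (2), namely $\langle a,\mathrm{cor}(a')\rangle_K=\langle \mathrm{res}(a),a'\rangle_L$. This shows that $a\in W^{\perp}$ if and only if $\langle \mathrm{res}(a),a'\rangle_L=0$ for every $a'\in \Sha(E/L)[2]$, i.e.\ if and only if $\mathrm{res}(a)$ lies in the radical of $\langle\ ,\rangle_L$, which by Theorem \ref{Yu} (1) equals $2\Sha(E/L)[4]$. Hence $W^{\perp}=\mathrm{res}^{-1}\!\left(2\Sha(E/L)[4]\right)$, where $\mathrm{res}\colon V\to \Sha(E/L)[2]$ is the restriction.

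The second step is a linear-algebra count. Since $R\subseteq W^{\perp}$ and the induced pairing on $\overline{V}$ is nondegenerate, the image $\overline{W}\coloneqq(W+R)/R$ satisfies $\#\overline{W}\cdot\#\overline{W}^{\perp}=\#\overline{V}$ with $\overline{W}^{\perp}=W^{\perp}/R$; rearranging and using that $\overline{W}$ is a quotient of $W$ gives $\#W\ge \#\overline{W}=\#V/\#W^{\perp}$. It then remains to bound $\#W^{\perp}$ from above. From the description in the previous step there is an exact sequence
\[0\to \mathrm{Ker}\!\left(\mathrm{res}|_{V}\right)\to W^{\perp}\to \mathrm{res}(V)\cap 2\Sha(E/L)[4]\to 0.\]
The inflation--restriction sequence identifies $\mathrm{Ker}(\mathrm{res}|_{V})$ with a subgroup of $H^1(\mathrm{Gal}(L/K),E(L))$ (as already used in Proposition \ref{tr and cor}), so $\#\mathrm{Ker}(\mathrm{res}|_{V})\le \#H^1(\mathrm{Gal}(L/K),E(L))$, while the third term is a subgroup of $2\Sha(E/L)[4]$. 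Combining these bounds yields $\#W^{\perp}\le \#2\Sha(E/L)[4]\cdot\#H^1(\mathrm{Gal}(L/K),E(L))$, and substituting into $\#W\ge \#V/\#W^{\perp}$ gives exactly the claimed inequality.

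I expect the main obstacle to be the bookkeeping around the degenerate pairing: one must track the radical $R$ carefully when passing between $V$ and $\overline{V}$, and verify that $W^{\perp}$ is genuinely the full preimage of $\overline{W}^{\perp}$ (which uses $R\subseteq W^{\perp}$). Everything else is either a direct application of Theorem \ref{Yu} or the standard inflation--restriction estimate already invoked above, so no new input about $\Sha$ or about local conditions is needed.
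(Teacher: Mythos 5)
Your proposal is correct and proves exactly the stated inequality; it rests on the same essential inputs as the paper's proof --- both parts of Theorem \ref{Yu} together with the inflation--restriction bound $\#\mathrm{Ker}(\mathrm{res}|_{\Sha(E/K)[2]})\le \#H^1(\mathrm{Gal}(L/K),E(L))$ --- but it organizes them differently. In your notation ($V=\Sha(E/K)[2]$, $W=\mathrm{cor}(\Sha(E/L)[2])$, $R=2\Sha(E/K)[4]$), the paper passes to the induced maps $\mathrm{res}'$ and $\mathrm{cor}'$ on the quotients by the radicals, proves that the images of $\mathrm{cor}'$ and $\mathrm{res}'$ have equal order via a commutative square of dual maps, and then bounds $\#\mathrm{Ker}(\mathrm{res}')$ by a snake-lemma diagram chase; you instead identify the orthogonal complement $W^{\perp}=\mathrm{res}^{-1}\left(2\Sha(E/L)[4]\right)$ directly from the adjunction formula of Theorem \ref{Yu}(2) and conclude by the counting identity $\#\overline{W}\cdot\#\overline{W}^{\perp}=\#\overline{V}$ for the nondegenerate induced pairing. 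The two arguments are dual packagings of one another: your $\overline{W}^{\perp}=W^{\perp}/R$ is literally the paper's $\mathrm{Ker}(\mathrm{res}')$, and your short exact sequence $0\to \mathrm{Ker}(\mathrm{res}|_{V})\to W^{\perp}\to \mathrm{res}(V)\cap 2\Sha(E/L)[4]\to 0$ does the work of the paper's snake lemma. What your version buys is economy: there is no auxiliary three-row diagram, the factor $\#2\Sha(E/K)[4]$ cancels automatically rather than appearing in an intermediate bound and then dropping out, and each hypothesis enters at exactly one visible step. The bookkeeping points you flagged yourself ($R\subseteq W^{\perp}$, well-definedness of the induced pairing on $V/R$, and the identification of $\overline{W}^{\perp}$ with $W^{\perp}/R$) are all immediate from Theorem \ref{Yu}(1) and finiteness, so there is no gap.
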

\begin{proof}
By Theorem \ref{Yu} (1), the Cassels--Tate pairing induces  non-degenerate pairings 

\begin{align*}
\langle \ ,\ \rangle'_K &: \frac{\Sha(E/K)[2]}{2\Sha(E/K)[4]} \times \frac{\Sha(E/K)[2]}{2\Sha(E/K)[4]} \to \mathbb{Z}/2\mathbb{Z}, \\
\langle \ ,\ \rangle'_L &: \frac{\Sha(E/L)[2]}{2\Sha(E/L)[4]} \times \frac{\Sha(E/L)[2]}{2\Sha(E/L)[4]} \to \mathbb{Z}/2\mathbb{Z}.
\end{align*}

Let $\mathrm{res'}:\dfrac{\Sha(E/K)[2]}{2\Sha(E/K)[4]}\to \dfrac{\Sha(E/L)[2]}{2\Sha(E/L)[4]}$ and $\mathrm{cor'}:\dfrac{\Sha(E/L)[2]}{2\Sha(E/L)[4]}\to \dfrac{\Sha(E/K)[2]}{2\Sha(E/K)[4]}$ be the map induced by $\mathrm{res}: \Sha(E/K)[2]\to \Sha(E/L)[2], \mathrm{cor}: \Sha(E/L)[2]\to \Sha(E/K)[2]$ respectively.

By Theorem \ref{Yu} (2), $\langle a+2\Sha(E/K)[4], \mathrm{cor'}(a'+2\Sha(E/L)[4])\rangle'_K\coloneq \langle a, \mathrm{cor}(a')\rangle_K=\langle  \mathrm{res}(a),a'\rangle_L\eqqcolon\langle  \mathrm{res'}(a+2\Sha(E/K)[4]),a'+2\Sha(E/L)[4]\rangle'_L$ holds.

Therefore, the following diagram is commutative, where the vertical map is an isomorphism induced by 
$\langle \ ,\ \rangle'_L$. Note that these two vertical arrows are isomorphisms because the induced pairing is non-degenerate.

\[
  \begin{CD}
    \dfrac{\Sha(E/K)[2]}{2\Sha(E/K)[4]}  @>\mathrm{res'}>>\dfrac{\Sha(E/L)[2]}{2\Sha(E/L)[4]}\\
    @V \cong VV @VV\cong V \\
    (\dfrac{\Sha(E/K)[2]}{2\Sha(E/K)[4]})^* @>>\mathrm{cor'}^*>(\dfrac{\Sha(E/L)[2]}{2\Sha(E/L)[4]})^*
  \end{CD}
\]

From this commutative diagram, 
\begin{equation}\#\mathrm{cor'}(\dfrac{\Sha(E/L)[2]}{2\Sha(E/L)[4]})=\#\mathrm{cor'}^*(({\dfrac{\Sha(E/K)[2]}{2\Sha(E/K)[4]}})^*)=\#\mathrm{res'}(\dfrac{\Sha(E/K)[2]}{2\Sha(E/K)[4]})\label{dual} \end{equation} holds.
Let us consider the following diagram. For simplicity, we abbreviate $\mathrm{res}: \Sha(E/K)[2]\to \Sha(E/L)[2]$ as $f$, and denote by $g$ the map induced by $f$ on $2\Sha(E/K)[4]$.
\vskip\baselineskip

\hspace*{-15mm}
\begin{tikzcd}[
  column sep={7em,between origins}, 
  row sep=2ex,
  bezier bounding box
]
 &
 &0\arrow[d]
 &0\arrow[d]
 &0\arrow[d]
 \\
 &
0 \arrow[r, shorten >=0.2em] & \mathrm{Ker}g \arrow[r, shorten >=0.2em] \arrow[d] &
 2\Sha(E/K)[4] \arrow[r,"g"] \arrow[d] &
2\Sha(E/L)[4] \arrow[r] \arrow[d] &
\mathrm{Coker}g\\  
 &
0 \arrow[r, shorten >=0.2em] & \mathrm{Ker}f \arrow[r, shorten >=0.5em] \arrow[d] &
\Sha(E/K)[2] \arrow[r,"f"] \arrow[d] &
\Sha(E/L)[2] \arrow[d]&
\\ &
0 \arrow[r, shorten >=0.5em] & \mathrm{Ker} (\mathrm{res'}) \arrow[r, shorten >=0.5em] \arrow[rrruu, in=140, out=140, looseness=1.6,"\delta"] &
\dfrac{\Sha(E/K)[2]}{2\Sha(E/K)[4]} \arrow[d] \arrow[r,"\mathrm{res'}"] &
\dfrac{\Sha(E/L)[2]}{2\Sha(E/L)[4]} \arrow[d] & \\
&&  & 0 & 0& \\
\end{tikzcd}

By applying the snake lemma to this diagram, we obtain an exact sequence: \[0\to \mathrm{Ker}g\to \mathrm{Ker}f\to \mathrm{Ker}(\mathrm{res'})\to \mathrm{Im}\delta\to 0.\]

Because  $\mathrm{Ker}f \subset H^1(\mathrm{Gal}(L/K), E(L))$, 
we obtain

{\small \begin{equation}
\begin{aligned}
    \#\mathrm{Ker}(\mathrm{res'}) &= \frac{\#\mathrm{Ker}f \, \#\mathrm{Im}\delta}{\#\mathrm{Ker}g} \\
    &\le \frac{\#H^1(\mathrm{Gal}(L/K), E(L)) \, \#\mathrm{Coker}g}{\#\mathrm{Ker}g}=\frac{\#H^1(\mathrm{Gal}(L/K), E(L)) \, \#2\Sha(E/L)[4]}{\#2\Sha(E/K)[4]}.
\end{aligned}
\label{snake}
\end{equation}}

Here, the last equality holds because of the exactness of \[0\to \mathrm{Ker}g\to 2\Sha(E/K)[4]\stackrel{g}{\to} 2\Sha(E/L)[4]\to \mathrm{Coker}g\to 0.\]
Therefore, 
\begin{align*}
\#\mathrm{cor}(\Sha(E/L)[2])\ge \#\mathrm{cor'}(\dfrac{\Sha(E/L)[2]}{2\Sha(E/L)[4]})&=\#\mathrm{res'}(\dfrac{\Sha(E/K)[2]}{2\Sha(E/K)[4]})\ (\text{by}\ \eqref{dual})\\
&= \frac{\#\Sha(E/K)[2]}{\#\mathrm{Ker}(\mathrm{res'})\#2\Sha(E/K)[4]} \\
&\ge \frac{\#\Sha(E/K)[2]}{ \#2\Sha(E/L)[4]\#H^1(\mathrm{Gal}(L/K),E(L))}\ .
\end{align*}

The last inequality follows from \eqref{snake}.\end{proof}

\begin{proposition}\label{trace lemma}
Let $L/K$ be a quadratic extension and $L=K(\sqrt{D})$. Then,  
 \[\#\mathrm{tr}(\Sha(E/L)[2])\ge \dfrac{\#\Sha(E_D/K)[2]}{4^{\mathrm{rank}(E/K)}\#E(K)[2]^2\#
 2 \Sha(E/L)[4]}\] holds. 

\end{proposition}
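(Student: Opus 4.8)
The plan is to combine Proposition \ref{tr and cor}, which already bounds $\#\mathrm{tr}(\Sha(E/L)[2])$ from below in terms of $\#\mathrm{cor}(\Sha(E/L)[2])$ and $\#H^1(\mathrm{Gal}(L/K),E(L))$, with Corollary \ref{cores}, which bounds $\#\mathrm{cor}(\Sha(E/L)[2])$ from below in terms of $\#\Sha(E/K)[2]$. Chaining these two inequalities gives
\[
\#\mathrm{tr}(\Sha(E/L)[2]) \ge \frac{\#\Sha(E/K)[2]}{\#2\Sha(E/L)[4]\,\bigl(\#H^1(\mathrm{Gal}(L/K),E(L))\bigr)^2}.
\]
So the first task is to transport this bound, which is phrased for the original curve $E$ over $K$, into a statement about the twist $E_D/K$. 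The natural move is to apply the \emph{same} chain of inequalities but with the roles of $E$ and its twist $E_D$ interchanged: over $L=K(\sqrt{D})$ the curves $E$ and $E_D$ become isomorphic via the fixed isomorphism $\tau$, so $\Sha(E/L)[2]\cong\Sha(E_D/L)[2]$ and $2\Sha(E/L)[4]\cong 2\Sha(E_D/L)[4]$, while the trace, corestriction, and the group $H^1(\mathrm{Gal}(L/K),E(L))$ match up under $\tau$. Running Proposition \ref{tr and cor} and Corollary \ref{cores} for $E_D/K$ therefore yields
\[
\#\mathrm{tr}(\Sha(E/L)[2]) \ge \frac{\#\Sha(E_D/K)[2]}{\#2\Sha(E/L)[4]\,\bigl(\#H^1(\mathrm{Gal}(L/K),E_D(L))\bigr)^2}.
\]

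The second task is to replace the squared cohomology factor by the explicit quantity $4^{\mathrm{rank}(E/K)}\#E(K)[2]^2$ appearing in the statement. Here I would invoke Theorem \ref{Qiu} applied to the twist: it gives $\#H^1(\mathrm{Gal}(L/K),E_D(L)) \le 2^{\mathrm{rank}((E_D)_D/K)}\times\#E_D(K)[2]$. Since $(E_D)_D\cong E$ over $K$, we have $\mathrm{rank}((E_D)_D/K)=\mathrm{rank}(E/K)$, so $2^{\mathrm{rank}((E_D)_D/K)}=2^{\mathrm{rank}(E/K)}$, and squaring produces the $4^{\mathrm{rank}(E/K)}$ factor. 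It remains to account for $\#E_D(K)[2]$ versus $\#E(K)[2]$: the $2$-torsion of a curve and its quadratic twist are governed by the same $x$-coordinates of the $2$-division polynomial, so one checks that $\#E_D(K)[2]=\#E(K)[2]$ (both equal the number of rational roots of $x^3+ax+b$, plus the identity); substituting $\#E_D(K)[2]^2=\#E(K)[2]^2$ completes the estimate.

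The main obstacle I anticipate is bookkeeping rather than depth: one must be careful that applying the earlier results to $E_D$ rather than $E$ is legitimate, i.e. that Proposition \ref{tr and cor}, Corollary \ref{cores}, and Theorem \ref{Qiu} were proved for an arbitrary elliptic curve over $K$ and quadratic extension $L/K$, so that specializing to $E_D$ is immediate. I would also verify the compatibility of $\tau$ with the $\mathrm{Gal}(L/K)$-action used in Definition \ref{def1}, so that the identifications $\Sha(E/L)[2]\cong\Sha(E_D/L)[2]$ and $2\Sha(E/L)[4]\cong 2\Sha(E_D/L)[4]$ respect trace and corestriction; granting this, the squared denominator $\bigl(\#H^1(\mathrm{Gal}(L/K),E_D(L))\bigr)^2$ bounds exactly by $4^{\mathrm{rank}(E/K)}\#E(K)[2]^2$, and the claimed inequality follows directly.
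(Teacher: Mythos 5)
Your proposal follows essentially the same route as the paper: run Proposition \ref{tr and cor}, Corollary \ref{cores} and Theorem \ref{Qiu} for the twist $E_D/K$, then transport the resulting bound on $\#\mathrm{tr}(\Sha(E_D/L)[2])$ back to $\#\mathrm{tr}(\Sha(E/L)[2])$ via $\tau$, using $(E_D)_D\cong E$ over $K$ and $\#E_D(K)[2]=\#E(K)[2]$ exactly as the paper does. The one point where your plan is genuinely imprecise is the claim that the trace ``matches up under $\tau$'': the isomorphism $\tau\colon E(L)\cong E_D(L)$ is \emph{not} $\mathrm{Gal}(L/K)$-equivariant; it satisfies $\tau(P^{\sigma})=-\tau(P)^{\sigma}$, so the compatibility check you defer would in fact fail as stated. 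What is true, and what the paper records in an explicit commutative diagram, is that the induced isomorphism $\phi\colon\Sha(E/L)\cong\Sha(E_D/L)$ intertwines $\mathrm{tr}=1+\sigma$ on $\Sha(E/L)$ with $1-\sigma$ on $\Sha(E_D/L)$; since $1-\sigma$ and $1+\sigma$ coincide on a $2$-torsion subgroup, one still gets $\#\mathrm{tr}(\Sha(E/L)[2])=\#\mathrm{tr}(\Sha(E_D/L)[2])$, which is all your chain needs. (Your parallel claim that corestriction matches up under $\tau$ does not even typecheck, as the two corestrictions land in $\Sha(E/K)$ and $\Sha(E_D/K)$ respectively, but this claim is never used: the chain for $E_D$ involves only $E_D$'s own corestriction.) With the sign observation supplied, your argument is complete and coincides with the paper's proof.
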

\begin{proof}
   Let $\phi : \Sha(E/L)\cong \Sha(E_D/L)$ be an isomorphism induced by $\tau : E(L)\cong E_D(L), (x,y)\mapsto (x,\frac{y}{\sqrt{D}})$. The following commutative diagram exists. 
\[
  \begin{CD}
    \Sha(E/L)  @>\mathrm{tr}>> \Sha(E/L)\\
    @V \phi VV @VV\phi V \\
    \Sha(E_D/L) @>>1-\sigma> \Sha(E_D/L)
  \end{CD}
\]
From this diagram, $\#
\mathrm{tr}(\Sha(E/L)[2])= \#\mathrm{tr}(\Sha(E_D/L)[2])$ for $2$-torsions.

By applying Proposition \ref{tr and cor}, Corollary \ref{cores}, and Theorem \ref{Qiu}
, we obtain the inequality.\end{proof}

\subsection{Main theorem}

Let us consider the inflation-restriction sequence 
\[0 \to H^1(\mathrm{Gal}(L/K), E(L)) \stackrel{\mathrm{inf}}{\to} H^1(G_K, E) \stackrel{\mathrm{res}}{\to} H^1(G_L, E)^{\mathrm{Gal}(L/K)}.\]
Taking the $2$-torsion subgroup of this sequence , we obtain an exact sequence
\[0 \to H^1(\mathrm{Gal}(L/K), E(L)) \stackrel{\mathrm{inf}}{\to} H^1(G_K, E)[2] \stackrel{\mathrm{res}}{\to} H^1(G_L, E)[2]^{\mathrm{Gal(L/K)}}.\] Note that $H^1(\mathrm{Gal}(L/K),E(L))$ is a $2$-torsion group, that is, $2H^1(\mathrm{Gal}(L/K),E(L))=0$ because $\#
\mathrm{Gal}(L/K)=2$.

Consider the local-global version of this diagram, and draw the following diagram with exact rows and columns:

\tikzcdset{
  arrow style=tikz,
  diagrams={>=Straight Barb}
}

\begin{figure}[H]
\centering

\newcommand{\bop}[1]{%
  \mathop{\smash[b]{\bigoplus\limits_{#1}}}%
}

\hspace*{-18mm}
 \begin{tikzcd}[column sep=2em, /tikz/font=\normalsize]
&[-0.5em] 0 \arrow[d] & 0 \arrow[d] & 0 \arrow[d] &[-1.5em]
\\
& \mathrm{Ker} F\arrow[r] \arrow[d]
& \Sha(E/K)[2] \arrow[r] \arrow[d]
& \mathrm{Ker} H \arrow[d]
\\
& H^1(\mathrm{Gal}(L/K),E(L)) \arrow[r,"\text{inf}"] \arrow[d, "F"]
& H^1(G_K,E)[2] \arrow[r,"\text{res}"] \arrow[d,"G"]
& \mathrm{res}(H^1(G_K,E)[2]) \arrow[d,"H"] \arrow[r]
& 0
\\
0 \arrow[r]
& \bop{v\in M_K} H^1(\mathrm{Gal}(L_w/K_v),E(L_w))
    \arrow[r] \arrow[d]
& \bop{v\in M_K} H^1(G_{K_v},E)[2]
    \arrow[r] \arrow[d]
& \bigoplus\limits_{v\in M_K} \mathrm{res}(H^1(G_{K_v},E)[2])
\\
& \mathrm{Coker} F \arrow[r,"j"] \arrow[d]
& X \arrow[d]
\\
& 0 & 0 .
\end{tikzcd}

\caption{A key diagram}\label{figure1}

\end{figure}

Notably, by Theorem \ref{Zhe}, $X\cong \mathrm{Sel}^2(E/K)^*$ holds.

\begin{lemma}\label{1}
$\#\mathrm{Ker}H= \dfrac{\#\Sha(E/K)[2]\#\bigoplus_{v\in M_K} H^1(\mathrm{Gal}(L_w/K_v),E(L_w))}{\#j(\mathrm{Coker}F)\#H^1(\mathrm{Gal}(L/K),E(L))}$. 

\end{lemma}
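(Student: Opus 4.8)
The plan is to read the identity off the snake lemma applied to the upper two horizontal rows of Figure~\ref{figure1}, regarded as a morphism of short exact sequences with vertical maps $F$, $G$, $H$. First I would confirm that both rows really are \emph{short} exact sequences. The top row is the inflation--restriction sequence for $H^1(-,E)$ truncated to $2$-torsion: it is exact on the right because $\mathrm{res}(H^1(G_K,E)[2])$ is by definition the image of $\mathrm{res}$, and exact on the left because $H^1(\mathrm{Gal}(L/K),E(L))$ is already killed by $2$ (as $\#\mathrm{Gal}(L/K)=2$), so $\mathrm{inf}$ carries it isomorphically onto the full kernel of $\mathrm{res}$ inside $H^1(G_K,E)[2]$. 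The bottom row is the direct sum of the analogous local inflation--restriction sequences, each short exact for the same $2$-torsion reason. Thus $(F,G,H)$ is a morphism of short exact sequences.

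Applying the snake lemma yields the six-term exact sequence
\[0 \to \mathrm{Ker}F \to \mathrm{Ker}G \to \mathrm{Ker}H \xrightarrow{\ \delta\ } \mathrm{Coker}F \xrightarrow{\ j\ } \mathrm{Coker}G \to \mathrm{Coker}H \to 0.\]
I would then identify its terms against the (exact) middle column of Figure~\ref{figure1}: by the definition of the Tate--Shafarevich group, $\mathrm{Ker}G=\Sha(E/K)[2]$, while $\mathrm{Coker}G=X$ and the induced cokernel map $\mathrm{Coker}F\to\mathrm{Coker}G$ is precisely the map $j$ displayed in the diagram; by Theorem~\ref{Zhe} we have $X\cong\mathrm{Sel}^2(E/K)$, which is finite. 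The initial segment $\mathrm{Ker}F\to\Sha(E/K)[2]\to\mathrm{Ker}H$ is exactly the top row of Figure~\ref{figure1}.

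Next I would check finiteness of every group entering the count: the local groups $H^1(\mathrm{Gal}(L_w/K_v),E(L_w))$ are finite and vanish for almost all $v$ by Proposition~\ref{local}, so their direct sum is finite; $H^1(\mathrm{Gal}(L/K),E(L))$ is finite by Theorem~\ref{Qiu}; and $\Sha(E/K)[2]$, $X$ are finite. Counting orders along the six-term sequence, exactness at $\mathrm{Ker}H$ gives $\#\mathrm{Ker}\delta=\#\mathrm{Ker}G/\#\mathrm{Ker}F$ (using injectivity of $\mathrm{Ker}F\to\mathrm{Ker}G$), and exactness at $\mathrm{Coker}F$ gives $\#\mathrm{Im}\delta=\#\mathrm{Ker}j=\#\mathrm{Coker}F/\#j(\mathrm{Coker}F)$, whence
\[\#\mathrm{Ker}H=\#\mathrm{Ker}\delta\cdot\#\mathrm{Im}\delta=\frac{\#\Sha(E/K)[2]}{\#\mathrm{Ker}F}\cdot\frac{\#\mathrm{Coker}F}{\#j(\mathrm{Coker}F)}.\]
Finally, the exactness of the left column of Figure~\ref{figure1}, namely
\[0\to\mathrm{Ker}F\to H^1(\mathrm{Gal}(L/K),E(L))\xrightarrow{F}\bigoplus_{v\in M_K}H^1(\mathrm{Gal}(L_w/K_v),E(L_w))\to\mathrm{Coker}F\to0,\]
gives $\#\mathrm{Coker}F/\#\mathrm{Ker}F=\#\bigoplus_{v\in M_K}H^1(\mathrm{Gal}(L_w/K_v),E(L_w))\big/\#H^1(\mathrm{Gal}(L/K),E(L))$; substituting this into the previous display produces the claimed formula.

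The algebraic identity itself is thus a one-line order computation once the snake sequence is in hand, so the only real content beyond the diagram is bookkeeping. I expect the main (though modest) obstacle to be the verification that both rows are genuinely short exact --- which hinges on the collapse of $H^1(\mathrm{Gal}(L/K),E(L))$ and its local analogues onto $2$-torsion --- together with confirming that the snake connecting map lands in $\mathrm{Coker}F$ with induced cokernel map $j$, and that every group in the count is finite so that multiplicativity of orders is legitimate.
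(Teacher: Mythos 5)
Your proposal is correct and is essentially the paper's own argument: the paper likewise applies the snake lemma to the rows of Figure \ref{figure1} to get the exact sequence $0\to \mathrm{Ker}F\to \Sha(E/K)[2]\to \mathrm{Ker}H \to \mathrm{Coker}F\to j(\mathrm{Coker}F) \to 0$, counts orders along it, and then substitutes $\#\mathrm{Coker}F/\#\mathrm{Ker}F=\#\bigoplus_{v\in M_K}H^1(\mathrm{Gal}(L_w/K_v),E(L_w))\big/\#H^1(\mathrm{Gal}(L/K),E(L))$ from the exact left column. Your extra verifications (short exactness of both rows via the $2$-torsion collapse of $H^1(\mathrm{Gal}(L/K),E(L))$, identification of $\mathrm{Ker}G$, $\mathrm{Coker}G$, and $j$, and finiteness of all groups involved) are exactly the bookkeeping the paper leaves implicit in the construction of the diagram.
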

\begin{proof}
By applying the snake lemma, 
 \[0\to \mathrm{Ker}F\to \Sha(E/K)[2]\to \mathrm{Ker}H \to \mathrm{Coker}F\to j(\mathrm{Coker}F) \to 0.\]

We obtain $\#\mathrm{Ker}H= \dfrac{\displaystyle \#\Sha(E/K)[2]\#\mathrm{Coker}F}{\displaystyle \#j(\mathrm{Coker}F)\#\mathrm{Ker}F}$. 
The left vertical exact sequence implies $\dfrac{\#\mathrm{Coker}F}{\#\mathrm{ker}F}=\dfrac{\#\bigoplus_{v\in M_K}H^1(\mathrm{Gal}(L_w/K_v),E(L_w))}{\#H^1(\mathrm{Gal}(L/K),E(L))}$, thus the proposition follows. \end{proof}

In Figure \ref{figure1}, $\mathrm{Ker}H$ lives in $\Sha(E/L)[2]$. Henceforth, we shall prove that the ratio of the order of $\Sha(E/L)[2]$ to that of $\mathrm{Ker}H$ is related to $\#\Sha(E_D/K)[2]$.

\begin{figure}[H]
\centering

\newcommand{\edgedesc}[1]{%
  \footnotesize\begin{tabular}{@{}l@{}}#1\end{tabular}%
}

\begin{tikzpicture}
  \node (D) at (3cm,0) {$\mathrm{Ker}H$};
  \node (B) at (0cm,1.5cm) {$\Sha(E/L)[2]^{\mathrm{Gal(L/K)}}$};
  \node (C) at (1.5cm,4cm) {$\Sha(E/L)[2]$};
  \path (B) edge node[above left] { $\#\mathrm{tr}(\Sha(E_D/L)[2]$)}(C);
  \path (C) edge (D);
  \path (D) edge (B);
\end{tikzpicture}

\caption{The gap between $\mathrm{Ker}H$ and $\Sha(E/L)[2]$}

\end{figure}

\begin{lemma}\label{2}
 $\dfrac{\#\Sha(E/L)[2]}{\#\mathrm{Ker}H}\ge \#\mathrm{tr}(\Sha(E/L)[2])$.

\end{lemma}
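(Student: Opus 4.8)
The plan is to exploit the chain of inclusions $\mathrm{Ker}H \subseteq \Sha(E/L)[2]^{\mathrm{Gal}(L/K)} \subseteq \Sha(E/L)[2]$ suggested by the figure, and to identify the resulting middle index with $\#\mathrm{tr}(\Sha(E/L)[2])$. First I would establish that $\mathrm{Ker}H$ lands inside the subgroup of $\mathrm{Gal}(L/K)$-invariants, not merely inside $\Sha(E/L)[2]$. Since $\mathrm{Ker}H$ is by construction a subgroup of $\mathrm{res}(H^1(G_K,E)[2])$, and every class in the image of the restriction map $\mathrm{res}\colon H^1(G_K,E)\to H^1(G_L,E)$ is fixed by $\mathrm{Gal}(L/K)$ (that is, $\sigma * \mathrm{res}(y)=\mathrm{res}(y)$ for the generator $\sigma$), we get $\mathrm{Ker}H \subseteq H^1(G_L,E)[2]^{\mathrm{Gal}(L/K)}$. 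Combined with the fact, already recorded after Figure \ref{figure1}, that $\mathrm{Ker}H \subseteq \Sha(E/L)[2]$, this yields $\mathrm{Ker}H \subseteq \Sha(E/L)[2]^{\mathrm{Gal}(L/K)}$.

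Next I would identify the invariants with the kernel of the trace. For $X\in\Sha(E/L)[2]$ we have $2X=0$, hence $-X=X$, so the defining formula $\mathrm{tr}(X)=X+\sigma * X$ rewrites as $\mathrm{tr}(X)=\sigma * X - X$. Consequently $\mathrm{tr}(X)=0$ if and only if $\sigma * X = X$, which gives
\[
\mathrm{Ker}\left(\mathrm{tr}\colon \Sha(E/L)[2]\to \Sha(E/L)[2]\right)=\Sha(E/L)[2]^{\mathrm{Gal}(L/K)}.
\]
Applying the first isomorphism theorem to $\mathrm{tr}$ then produces
\[
\#\mathrm{tr}(\Sha(E/L)[2])=\frac{\#\Sha(E/L)[2]}{\#\Sha(E/L)[2]^{\mathrm{Gal}(L/K)}}=\left[\Sha(E/L)[2]:\Sha(E/L)[2]^{\mathrm{Gal}(L/K)}\right].
\]

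Finally I would assemble the two steps. Using the tower $\mathrm{Ker}H \subseteq \Sha(E/L)[2]^{\mathrm{Gal}(L/K)} \subseteq \Sha(E/L)[2]$ and multiplicativity of indices,
\[
\frac{\#\Sha(E/L)[2]}{\#\mathrm{Ker}H}=\#\mathrm{tr}(\Sha(E/L)[2])\cdot\left[\Sha(E/L)[2]^{\mathrm{Gal}(L/K)}:\mathrm{Ker}H\right]\ge \#\mathrm{tr}(\Sha(E/L)[2]),
\]
since the remaining index is at least $1$. I do not expect a serious obstacle here; the proof is essentially formal once the $2$-torsion identity $\Sha(E/L)[2]^{\mathrm{Gal}(L/K)}=\mathrm{Ker}(\mathrm{tr})$ is recognized. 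The one point requiring genuine care is the first step, namely verifying that $\mathrm{Ker}H$ lies inside the invariants rather than just inside $\Sha(E/L)[2]$; this rests on the standard fact that classes in the image of restriction are Galois-stable, which is exactly what upgrades the containment to the tighter one needed for the factorization.
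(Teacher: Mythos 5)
Your proof is correct and follows essentially the same route as the paper: the chain $\mathrm{Ker}H \subseteq \Sha(E/L)[2]^{\mathrm{Gal}(L/K)} \subseteq \Sha(E/L)[2]$ together with the identification $\Sha(E/L)[2]/\Sha(E/L)[2]^{\mathrm{Gal}(L/K)} \cong (\sigma-1)\Sha(E/L)[2] = \mathrm{tr}(\Sha(E/L)[2])$, which is exactly your first-isomorphism-theorem step since $\sigma-1$ and $\mathrm{tr}=\sigma+1$ agree on $2$-torsion. The only difference is that you spell out details the paper leaves implicit (Galois-stability of the image of restriction, and the kernel-of-trace identification), which is a welcome addition rather than a divergence.
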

\begin{proof}
Since $\mathrm{Ker}H\subset \Sha(E/L)[2]^{\mathrm{Gal}(L/K)}\subset \Sha(E/L)[2]$ and  \[\dfrac{\Sha(E/L)[2]}{\Sha(E/L)[2]^{\mathrm{Gal}(L/K)}}\cong (\sigma-1)\Sha(E/L)[2]=\mathrm{tr}(\Sha(E/L)[2]),\]
we obtain $\#\Sha(E/L)[2]\ge 
\#\mathrm{tr}(\Sha(E/L)[2])\#\mathrm{Ker}H$.
\end{proof}

By combining Lemma \ref{1} and Lemma \ref{2} and Proposition \ref{trace lemma}, we obtain 

{\small\begin{align*}
\#\Sha(E/L)[2] &\ge \#\mathrm{tr}(\Sha(E/L)[2])\#\mathrm{Ker}H \quad(\text{by}\ \text{Lemma}\  \ref{2}) \\
&\ge \dfrac{\#\Sha(E_D/K)[2]}{4^{\mathrm{rank}(E/K)}\#E(K)[2]^2\#2 \Sha(E/L)[4]} \#
\mathrm{Ker}H \quad (\text{by}\ \text{Proposition}\ \ref{trace lemma})\\&\ge \dfrac{\#\Sha(E_D/K)[2]\#\Sha(E/K)[2] \# \bigoplus_{v\in M_K} H^1(\mathrm{Gal}(L_w/K_v),E(L_w))}{4^{\mathrm{rank}(E/K)}\#E(K)[2]^2\#j(\mathrm{Coker}F)\#2\Sha(E/L)[4]\#H^1(\mathrm{Gal}(L/K),E(L))
}\quad .\\
\end{align*}}

The last inequality follows from lemma \ref{1}.

Because
\begin{align*}
\#j(\mathrm{Coker}F) &\leq \#X = \#\mathrm{Sel}^2(E/K)( \text{by}\ \text{Theorem}\ \ref{Zhe}) \\
                     &= \#\frac{E(K)}{2E(K)} \times \#\Sha(E/K)[2](\text{by}\ \text{exact sequence} \ \ref{basic}) \\
                     &= \#E(K)[2] \times 2^{\mathrm{rank}(E/K)} \times \#\Sha(E/K)[2]
\end{align*} holds, we obtain the following inequality.

\begin{proposition}\label{main inequality}
Let $E/K$ be an elliptic curve and let $L/K$ be a quadratic field extension of a number field. Then, 
\[\dfrac{\#\Sha(E/L)[4]}{\#\Sha(E_D/K)[2]} \geq \frac{1}{\#E(K)[2]^3 \times 2^{3\text{rank}(E/K)}} \cdot \frac{\# \bigoplus_{v \in M_K} H^1(\mathrm{Gal}(L_w/K_v),E(L_w))}{\#H^1(\mathrm{Gal}(L/K),E(L))}\]
holds. 
\end{proposition}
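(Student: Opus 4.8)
The plan is to assemble the stated inequality purely from the ingredients already in place, treating the proposition as the bookkeeping consequence of Lemma~\ref{1}, Lemma~\ref{2}, and Proposition~\ref{trace lemma}, together with the Selmer bound coming from Theorem~\ref{Zhe}. First I would record the three-step chain of inequalities for $\#\Sha(E/L)[2]$: Lemma~\ref{2} gives $\#\Sha(E/L)[2]\ge \#\mathrm{tr}(\Sha(E/L)[2])\,\#\mathrm{Ker}H$; substituting the lower bound for $\#\mathrm{tr}(\Sha(E/L)[2])$ from Proposition~\ref{trace lemma} and then expanding $\#\mathrm{Ker}H$ via Lemma~\ref{1} produces the single inequality
\[
\#\Sha(E/L)[2]\ge \frac{\#\Sha(E_D/K)[2]\,\#\Sha(E/K)[2]\,\#\bigoplus_{v\in M_K}H^1(\mathrm{Gal}(L_w/K_v),E(L_w))}{4^{\mathrm{rank}(E/K)}\#E(K)[2]^2\,\#j(\mathrm{Coker}F)\,\#2\Sha(E/L)[4]\,\#H^1(\mathrm{Gal}(L/K),E(L))}.
\]

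The second step is to eliminate the term $\#j(\mathrm{Coker}F)$, the only quantity appearing that is not intrinsic to the statement. Since $j(\mathrm{Coker}F)$ is a subgroup of $X\cong\mathrm{Sel}^2(E/K)$ by Theorem~\ref{Zhe}, and since the exact sequence~\eqref{basic} gives $\#\mathrm{Sel}^2(E/K)=\#E(K)[2]\cdot 2^{\mathrm{rank}(E/K)}\cdot\#\Sha(E/K)[2]$, I would substitute this upper bound. Because $\#j(\mathrm{Coker}F)$ sits in the denominator, replacing it by a larger quantity only weakens the right-hand side, so the inequality is preserved. After the substitution the factor $\#\Sha(E/K)[2]$ cancels against the identical factor in the numerator, the two powers of two combine as $4^{\mathrm{rank}(E/K)}\cdot 2^{\mathrm{rank}(E/K)}=2^{3\,\mathrm{rank}(E/K)}$, and $\#E(K)[2]^2\cdot\#E(K)[2]=\#E(K)[2]^3$.

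The final step is to pass from $\Sha(E/L)[2]$ to $\Sha(E/L)[4]$. The short exact sequence $0\to\Sha(E/L)[2]\to\Sha(E/L)[4]\xrightarrow{\,\times 2\,}2\Sha(E/L)[4]\to 0$ yields $\#\Sha(E/L)[4]=\#\Sha(E/L)[2]\cdot\#2\Sha(E/L)[4]$, exactly the identity recorded just after Theorem~\ref{main theorem 1}. Multiplying the simplified inequality through by $\#2\Sha(E/L)[4]$ therefore clears that term from the denominator and converts the left-hand side into $\#\Sha(E/L)[4]$; dividing by $\#\Sha(E_D/K)[2]$ then gives the claimed bound. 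I do not expect a genuine obstacle here, since the substantive content lives entirely in the earlier lemmas; the only care required is to track the direction of the inequality at the substitution of $\#j(\mathrm{Coker}F)$ and to apply the $[4]$-versus-$[2]$ identity in the correct place, both of which are routine once the chain above is written out explicitly.
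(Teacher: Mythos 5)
Your proposal is correct and follows essentially the same route as the paper: the same chain of inequalities from Lemma~\ref{2}, Proposition~\ref{trace lemma}, and Lemma~\ref{1}, the same elimination of $\#j(\mathrm{Coker}F)$ via Theorem~\ref{Zhe} and the exact sequence~\eqref{basic}, and the same final conversion using $\#\Sha(E/L)[4]=\#\Sha(E/L)[2]\,\#2\Sha(E/L)[4]$. No gaps; the bookkeeping (cancellation of $\#\Sha(E/K)[2]$, combining $4^{\mathrm{rank}}\cdot 2^{\mathrm{rank}}=2^{3\,\mathrm{rank}}$ and $\#E(K)[2]^2\cdot\#E(K)[2]=\#E(K)[2]^3$) matches the paper exactly.
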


\begin{remark}
In this paper, we restrict our consideration to the case of the 2-torsion subgroup of the Tate--Shafarevich group. This focus is motivated by the fact that $\#\Sha(E/\Bbb{Q}(\sqrt{D}))[n]=\#\Sha(E_D/\Bbb{Q})[n]\#\Sha(E/\Bbb{Q})[n]$ where $n$ is an odd number. This is because $\mathrm{Gal}(\Bbb{Q}(\sqrt{D})/\Bbb{Q})$ acts on the odd abelian group $\Sha(E/\Bbb{Q}(\sqrt{D}))[n]$, and thus $\Sha(E/\Bbb{Q}(\sqrt{D}))[n]$ decomposes into a direct sum of $\Sha(E/\Bbb{Q}(\sqrt{D}))[n]^{+}\coloneqq \{a\in \Sha(E/\Bbb{Q}(\sqrt{D}))[n]\mid \sigma * a=a\}=\Sha(E/\Bbb{Q})[n]$ and $\Sha(E/L)[n]^{-}\coloneqq \{a\in \Sha(E/\Bbb{Q}(\sqrt{D}))[n]\mid \sigma *a=-a\}$. 
For the definition of the action denoted by $*$, see Definition \ref{def1}. The isomorphism
$\Sha(E/\Bbb{Q}(\sqrt{D}))[n]^{-}\cong \Sha(E_D/\Bbb{Q})[n]$ follows from the commutative diagram in the proof of Proposition \ref{trace lemma} and the isomorphism $\Sha(E_D/\Bbb{Q}(\sqrt{D}))[n]^{\mathrm{Gal}(\Bbb{Q}(\sqrt{D})/\Bbb{Q})} \cong \Sha(E_D/\Bbb{Q})[n]$.

\end{remark}

\begin{remark}\label{difficult1}
Let $K$ be a number field and $E/K$ be an elliptic curve over $K$. Let $L=K(\sqrt{D})$ be a quadratic extension of $K$. H. Yu explicitly expressed the formula for $\#\Sha(E/L)$ under the assumption that the Tate--Shafarevich group of elliptic curves over $K$ are finite, that is, 
\[\dfrac{\#\Sha(E/L)}{\#\Sha(E_D/K)} =\frac{\#\Sha(E/K)}{\#\mathrm{Coker}(\mathrm{trace} : E(L)\to E(K))} \cdot \frac{\# \bigoplus_{v \in M_K} H^1(\mathrm{Gal}(L_w/K_v),E(L_w))}{\#H^1(\mathrm{Gal}(L/K),E(L))}\](see [\cite{Yu}, Main Theorem]).\par

Combining Yu's formula with Remark \ref{localglobal}, we know that \[\#\Sha(E/\Bbb{Q}(\sqrt{D}))/\#\Sha(E_D/\Bbb{Q})\] is unbounded from above under the assumption that Tate--Shafarevich group of elliptic curves over $\Bbb{Q}$ are finite. 

Compare the following diagram from Yu's paper with Figure \ref{figure1}.

\newcommand{\bop}[1]{%
  \mathop{\smash[b]{\bigoplus\limits_{#1}}}%
}
\hspace*{-9mm}
\begin{tikzcd}[column sep=2em]
&[-0.5em] 0 \arrow[d] & 0 \arrow[d] & 0 \arrow[d] &[-1.5em]
\\
& \mathrm{Ker} F'\arrow[r] \arrow[d]
& \Sha(E/K) \arrow[r] \arrow[d]
& \mathrm{Ker} H' \arrow[d]
\\
& H^1(\mathrm{Gal}(L/K),E(L)) \arrow[r,"\text{inf}"] \arrow[d, "F'"]
& H^1(G_K,E) \arrow[r,"\text{res}"] \arrow[d,"G'"]
& \mathrm{res}(H^1(G_K,E)) \arrow[d,"H'"] \arrow[r]
& 0
\\
0 \arrow[r]
& \bop{v\in M_K} H^1(\mathrm{Gal}(L_w/K_v),E(L_w))
    \arrow[r] \arrow[d]
& \bop{v\in M_K} H^1(G_{K_v},E)
    \arrow[r] \arrow[d]
& \bigoplus\limits_{v\in M_K} \text{res}(H^1(G_{K_v},E))
\\
& \mathrm{Coker} F' \arrow[r,"j'"] \arrow[d]
&{{\widehat{E(K)}}}^* \arrow[d]
\\
& 0 & 0
\end{tikzcd}

Note that the exactness of the middle column sequence requires the  assumption that $\Sha(E/K)$ is finite. In this paper, $X$ plays the role that ${{\widehat{E(K)}}}^*$ (Pontryagin dual of profinite completion of $E(K)$) plays in the diagram above.

\end{remark}

\begin{theorem}\label{cor}
 For an arbitrary $r\in \Bbb{Z}$ and an elliptic curve over $\Bbb{Q}$ with $E(\Bbb{Q})[2]\cong \Bbb{Z}/2\Bbb{Z}$ that does not have a cyclic 4-isogeny defined over $\Bbb{Q}(E[2])$, there exists a square-free integer $D$ such that
 
 $\dfrac{\#\Sha(E/\Bbb{Q}(\sqrt{D}))[4]}{\#\Sha(E_D/\Bbb{Q})[2]}\ge r$ and $\#\Sha(E_D/\Bbb{Q})[2]\ge r$.
\end{theorem}

\begin{proof}
$E/\Bbb{Q}$ has a Weierstrass form $E:y^2=x^3+ax^2+bx$($a,b \in \Bbb{Z}$) and let $E'$ be $E' : y^2=x^3-2ax^2+(a^2-4b)x$. Let $\phi:E\to E'$ be $(x,y)\mapsto \left(\frac{y^2}{x^2}, \frac{y(b-x^2)}{x^2}\right)$ be degree $2$ isogeny and $\hat{\phi}$ be its dual.

The following inequality holds because there exists an exact sequence: $0\to E'(\Bbb{Q})[\hat{\phi}]/\phi(E(\Bbb{Q})[2])\to \mathrm{Sel}^{\phi}(E/\Bbb{Q})\to \mathrm{Sel}^2(E/\Bbb{Q})$ (see \cite{Sch}, lemma 9.1).

\[\#\Sha(E_D/\Bbb{Q})[2]=\frac{\#\mathrm{Sel}^2(E_D/\Bbb{Q})}{\#E_D(\Bbb{Q})/2E_D(\Bbb{Q})}\ge  \frac{\#\mathrm{Sel}^{\phi}(E_D/\Bbb{Q})}{\#\mathrm{Sel}^{\hat{\phi}}({E'_D}/\Bbb{Q})}\times\frac{1}{2\times 2^{\mathrm{rank}(E_D/\Bbb{Q})}\times \#E(\Bbb{Q})[2] }.\]
Here, $\dfrac{\#\mathrm{Sel}^{\phi}(E_D/\Bbb{Q})}{\#\mathrm{Sel}^{\hat{\phi}}({E'_D}/\Bbb{Q})}$ is  what we call the Tamagawa ratio. Let $\Delta$ and $\Delta'$ be discriminants of any integral model of $E$ and $E'$ respectively. By Proposition 3.3 of \cite{Kl},

\[\dfrac{\#\mathrm{Sel}^{\phi}(E_D/\Bbb{Q})}{\#\mathrm{Sel}^{\hat{\phi}}({E'_D}/\Bbb{Q})}\ge \prod_{p\mid D\ \text{and} \ p \nmid \Delta} 2^{\frac{\big(\frac{\Delta'}{p}\big)-\big(\frac{\Delta}{p}\big)}{2}}\] holds. Let $h(D)\coloneqq \prod_{p\mid D\ \text{and} \ p \nmid \Delta} 2^{\frac{\big(\frac{\Delta'}{p}\big)-\big(\frac{\Delta}{p}\big)}{2}}$. It is sufficient to prove
that $\forall{r}\in \Bbb{Z}, \exists D$: square free such that $g(D)\ge r$\ (see Proposition \ref{g(D)} for the definition of $g(D)$) and $h(D)\ge r$. 

For arbitrary $r\in \Bbb{Z}_{\ge1}$, let us take $R$ such that $2^{R-4}\ge r$. By the condition that $E(\Bbb{Q})[2]\cong \Bbb{Z}/2\Bbb{Z}$ and $E$ does not have a cyclic 4-isogeny defined over $\Bbb{Q}(E[2])$, by Proposition 5.2 of \cite{Kl},
we have $\Delta \Delta' \notin {\Bbb{Q}}^{\times 2}$. 
By the Chebotarev density theorem, there exist infinitely many primes $p$ that satisfy the following conditions:  
\begin{enumerate}

\item $\big(\frac{\Delta'}{p}\big)=1$

\item $\big(\frac{\Delta}{p}\big)=-1$

\end{enumerate}
Take $p_1,\cdots, p_R$ that satisfy conditions $(1)$ and $(2)$. Take different odd prime numbers $l_1,\cdots,l_{R}$ that satisfy the conditions $\tilde{E}(\Bbb{F}_{l_i})[2]\cong (\Bbb{Z}/2\Bbb{Z})^2$ and $E/\Bbb{Q}$ has good reduction at $l_i$ (We choose primes $(l_1,\dots,l_R)$ satisfying conditions (1) and (2) in the proof of Proposition \ref{g(D)}). There exists a square-free integer $D_R$ with the number of prime factors no greater than $4$ such that $\mathrm{rank}(E_D/\Bbb{Q})=0$, where $D=p_1\cdots p_Rl_1\cdots l_{R}D_R$ by Theorem \ref{H}. For this $D$, $h(D)\ge r$ holds. Also, \(g(D)\ge r\) holds. Indeed, the number of odd primes $q$ that ramify in the quadratic extension $\mathbb{Q}(\sqrt{D})/\mathbb{Q}$ and are primes of good reduction for $E/\mathbb{Q}$ with $\tilde{E}(\mathbb{F}_q)\cong (\mathbb{Z}/2\mathbb{Z})^2$ is at least $R-4$. Hence $\#\bigoplus_{p} H^1\!\left(\mathrm{Gal}\!\left(\Bbb{Q}_p(\sqrt{D})/\Bbb{Q}_p\right),\,
E\!\left(\Bbb{Q}_p(\sqrt{D})\right)\right) \;\ge\; 4^{R-4}$ by Proposition \ref{local} (3). 
Therefore,
\[
g(D) \;\ge\; \frac{4^{R-4}}{2^{\mathrm{rank}(E_D/\Bbb{Q})}\,\#E(\Bbb{Q})[2]}
\;\ge\; 2^{2R-9} \;\ge\; r.\qedhere
\]
\end{proof}

\section{Decreasing $\Sha(E/\mathbb{Q}(\sqrt{D}))[2]$ and $\Sha(E_D/\mathbb{Q})[2]$ simultaneously}

 In this section, we prove that for a prime $p$, there exist infinitely many quadratic fields $K=\Bbb{Q}(\sqrt{D})$ for the elliptic curve $E : y^2=x^3+px$, such that $\Sha(E_D/\Bbb{Q})[2]=0$ and $\#\Sha(E/K)[2]\le 4$ respectively under the assumption that Tate--Shafarevich group is finite. 
 
 Let us recall the theory of the descent using the two 2-isogenies $\phi$ and $\hat{\phi}$ as described in \cite{sil}. Let $E/K$ be an elliptic defined by $E : y^2=x^3+ax\ (a \in \Bbb{Z})$. Let $E': y^2=x^3-4ax$. Let $\phi:E\to E'$ be $(x,y)\mapsto \left(\frac{y^2}{x^2}, \frac{y(a-x^2)}{x^2}\right)$ be degree $2$ isogeny and $\hat{\phi}$ be its dual. Let \[S_{E/K}\coloneqq \{v\in M_K : E\ \text{has bad reduction at}\ v \} \cup \{\text{infinite places of}\ K\}.\]
When we fix $E/\mathbb{Q}$, $\#S_{E/K}$ depends on $K$. Let $H^1(G_K,E[\phi]; S)$ be \[H^1(G_K,E[\phi]; S) \stackrel{\mathrm{def}}{=}\{[\sigma] \in H^1(G_K,E[\phi])\mid \sigma \ \text{is unramified outside}\  S\}\] where unramified outside $S$ means restriction of $\sigma$ to inertia group $I_v\coloneq \mathrm{Gal}(\overline{{{K_v}^{nr}}}/{K_v}^{\mathrm{nr}})$ \ at \ $v\notin S$ is trivial. The Selmer group $K(S,2)$ of a field $K$, which is a finite group, is defined as \[K(S,2)\stackrel{\mathrm{def}}{=} \{\bar{d}\in K^{\times}/{K^{\times}}^2 \mid v(d)\equiv 0\bmod 2, \forall v\notin S  \}\]

The Selmer group $\mathrm{Sel}^{\phi}(E/K)$ is embedded into $K(S,2)$ via \[\mathrm{Sel}^{\phi}(E/K)\subset H^1(G_K,E[\phi];S)\cong K(S,2)\] where the last isomorphism $K(S,2)\cong H^1(G_K,E[\phi]; S)$ is given by $\overline{d} \mapsto [f_d : \sigma \mapsto \frac{{\sqrt{d}}^{\sigma}}{\sqrt{d}}]$.  Here, $(-)^{\sigma}$ denotes the action of $\sigma \in G_K$ on elements of $\overline{K}$. Note that $f_d$ is unramified outside $S$ if only if $v(d)\equiv 0 \mod 2$ for $v\notin S$. Here, note that we identify $E[\phi] \cong \mu_2$ as a trivial $G_K$-module. \par

Let $C_d$ be the image of $\overline{d}$ by the composition $K(S,2)\cong H^1(G_K,E[\phi];S)\subset H^1(G_K,E[\phi])\twoheadrightarrow WC(E/K)[\phi]$. [\cite{sil}, Proposition $4.9$] shows that $C_d$ is isomorphic over $K$ to a projective closure of $dy^2=d^2-4ax^4$ in $\Bbb{P}_K(1,2,1)$. The Selmer group is cut out from $K(S,2)$  by the condition that corresponding torsors has a rational points locally at bad primes in $S$. Let $C_d$ and $C'_d$ be the curves defined by the equations $dy^2 = d^2 - 4ax^4$ and $dy^2 = d^2 + 16ax^4$, respectively. Then, 

\[
\mathrm{Sel}^{\phi}(E/K)\cong 
\left\{
  \bar{d}\in K(S,2)
  \;\middle|\;
  \begin{aligned}
    & C_d(K_v)\neq \emptyset , \forall v\in S, \\
    & C_d: dy^2=d^2-4ax^4
  \end{aligned}
\right\}.
\]

By replacing $a\mapsto -4a$, we obtain the following.

\[
\mathrm{Sel}^{\hat{\phi}}(E'/K)\cong 
\left\{
  \bar{d}\in K(S,2)
  \;\middle|\;
  \begin{aligned}
    & C'_d(K_v)\neq \emptyset , \forall v\in S, \\
    & C'_d: dy^2=d^2+16ax^4
  \end{aligned}
\right\}.
\]

\begin{remark}\label{infinite}
When we write $C_d : dy^2=d^2-4ax^4$, it precisely represents the projective curve obtained by embedding $dy^2=d^2-4ax^4$ into the weighted projective space $\mathbb{P}(1,2,1)$. Simply taking the projective closure in $\mathbb{P}^2$ would result in a singular point at $[0:1:0]$, which cannot be adopted as a torsor. Therefore, to eliminate the singular point, we glue together two nonsingular affine curves, $C_0: dy^2=d^2-4ax^4$ and $C_1:dv^2=d^2u^4-4a$, using the relation $u=\frac{1}{x}, v=\frac{y}{x^2}$.
Specifically, we embed these curves into $\mathbb{P}(1,2,1)$ as follows: $i: C_0 \to \mathbb{P}(1,2,1)$ given by $(x,y) \mapsto [x:y:1]$ and 
$v: C_1 \to \mathbb{P}(1,2,1)$ given by $(u,v) \mapsto [1:v:u]$. We then define the curve $C_d$ in $\mathbb{P}_K(1,2,1)$ as $C_d=i(C_0) \cup v(C_1)$. The projective closure of affine part of $C_d$ has two points \[[1: \pm\sqrt{\dfrac{-4a}{d}}:0]\] at infinity in $\Bbb{P}_K(1,2,1)$. 

\end{remark}

The following inequality gives an upper bound for the order of $\Sha(E/K)[2]$.

\begin{proposition}\label{two sequence}
 Let $E/K$ be an elliptic curve. Let $\phi : E\to E'$ be an isogeny of degree $2$ and $\hat{\phi} : E'\to E$ be the dual isogeny of $\phi$. The following inequality holds:
  
\begin{align*}
\dim_{\Bbb{F}_2} \Sha(E/K)[2] &\le 
\dim_{\Bbb{F}_2} \operatorname{Sel}^{\phi}(E/K) + \dim_{\Bbb{F}_2} \operatorname{Sel}^{\hat{\phi}}(E'/K) \\
&\quad - \dim_{\Bbb{F}_2} \dfrac{E'(K)[\hat{\phi}]}{\phi(E(K)[2])} - \dim_{\Bbb{F}_2} \dfrac{E(K)}{2E(K)}.
\end{align*}

\end{proposition}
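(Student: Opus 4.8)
The plan is to reduce the whole statement to a dimension count of $\Bbb{F}_2$-vector spaces built from two exact sequences, the second of which is the standard two-isogeny descent sequence. First, taking $n=2$ in the fundamental exact sequence \eqref{basic} gives $0 \to E(K)/2E(K) \to \mathrm{Sel}^2(E/K) \to \Sha(E/K)[2] \to 0$, hence
\[
\dim_{\Bbb{F}_2}\Sha(E/K)[2] = \dim_{\Bbb{F}_2}\mathrm{Sel}^2(E/K) - \dim_{\Bbb{F}_2}\frac{E(K)}{2E(K)}.
\]
Thus it suffices to bound $\dim_{\Bbb{F}_2}\mathrm{Sel}^2(E/K)$ above by $\dim_{\Bbb{F}_2}\mathrm{Sel}^{\phi}(E/K) + \dim_{\Bbb{F}_2}\mathrm{Sel}^{\hat\phi}(E'/K) - \dim_{\Bbb{F}_2}\frac{E'(K)[\hat\phi]}{\phi(E(K)[2])}$.

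Second, I would invoke the descent exact sequence relating the three Selmer groups attached to $\phi$, $\hat\phi$, and $[2]=\hat\phi\circ\phi$. Since $[2]$ factors as $\hat\phi\circ\phi$, one has $E[\phi]\subset E[2]$, and restricting $\phi$ to $E[2]$ yields the short exact sequence of $G_K$-modules
\[
0\to E[\phi]\to E[2]\xrightarrow{\phi}E'[\hat\phi]\to 0
\]
(surjectivity holds because $E[2]$ has order $4$ while $E[\phi]$ and $E'[\hat\phi]$ each have order $2$). Passing to Galois cohomology and imposing the local conditions that cut out the Selmer subgroups produces the four-term exact sequence
\[
0 \to \frac{E'(K)[\hat\phi]}{\phi(E(K)[2])} \to \mathrm{Sel}^{\phi}(E/K) \xrightarrow{\alpha} \mathrm{Sel}^2(E/K) \xrightarrow{\beta} \mathrm{Sel}^{\hat\phi}(E'/K),
\]
which is the natural extension of the sequence of [\cite{Sch}, Lemma 9.1] already used in the proof of Corollary \ref{cor}.

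Third, the dimension bookkeeping is then immediate: exactness at $\mathrm{Sel}^{\phi}(E/K)$ gives $\dim_{\Bbb{F}_2}\mathrm{im}\,\alpha = \dim_{\Bbb{F}_2}\mathrm{Sel}^{\phi}(E/K) - \dim_{\Bbb{F}_2}\frac{E'(K)[\hat\phi]}{\phi(E(K)[2])}$, while exactness at $\mathrm{Sel}^2(E/K)$ gives $\ker\beta = \mathrm{im}\,\alpha$, so that $\dim_{\Bbb{F}_2}\mathrm{Sel}^2(E/K) = \dim_{\Bbb{F}_2}\mathrm{im}\,\alpha + \dim_{\Bbb{F}_2}\mathrm{im}\,\beta \le \dim_{\Bbb{F}_2}\mathrm{im}\,\alpha + \dim_{\Bbb{F}_2}\mathrm{Sel}^{\hat\phi}(E'/K)$. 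Substituting into the first display yields exactly the asserted inequality.

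The routine part is the dimension count; the one point deserving care — and the main obstacle if one wants a self-contained argument rather than a citation — is the construction and exactness of the four-term sequence, in particular the verification that the connecting homomorphisms are compatible with the local conditions defining the Selmer subgroups at every place $v$. This compatibility is precisely what forces the correction term $\frac{E'(K)[\hat\phi]}{\phi(E(K)[2])}$, which measures the failure of $\alpha\colon \mathrm{Sel}^{\phi}(E/K)\to \mathrm{Sel}^2(E/K)$ to be injective. Once the exact sequence is granted (e.g. by citing \cite{Sch}), the proposition follows with no further input.
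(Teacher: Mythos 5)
Your proposal is correct and follows essentially the same route as the paper: both arguments rest on the four-term descent exact sequence $0 \to E'(K)[\hat\phi]/\phi(E(K)[2]) \to \mathrm{Sel}^{\phi}(E/K) \to \mathrm{Sel}^2(E/K) \to \mathrm{Sel}^{\hat\phi}(E'/K)$ cited from [\cite{Sch}, Lemma 9.1], combined with the sequence \eqref{basic} and a straightforward $\Bbb{F}_2$-dimension count. Your sketch of how that sequence arises from $0\to E[\phi]\to E[2]\to E'[\hat\phi]\to 0$ is a small addition the paper omits, but it changes nothing substantive.
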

\begin{proof}

 There exists an exact sequence: \[0\to E'(K)[\hat{\phi}]/\phi(E(K)[2])\to \mathrm{Sel}^{\phi}(E/K)\to \mathrm{Sel}^2(E/K)\stackrel{\phi}{\to} \mathrm{Sel}^{\hat{\phi}}(E'/K)\] (see [\cite{Sch}, lemma 9.1]). Thus, 
\begin{equation}\dim_{\Bbb{F}_2}\mathrm{Sel}^2(E/K)\le \dim_{\Bbb{F}_2}\mathrm{Sel}^{\phi}(E/K)+\dim_{\Bbb{F}_2}\mathrm{Sel}^{\hat{\phi}}(E'/K)-\dim_{\Bbb{F}_2}\dfrac{E'(K)[\hat{\phi}]}{\phi(E(K)[2])}\label{ine}.\end{equation}

Therefore, 

{\small \[
\begin{aligned}
    &\dim_{\Bbb{F}_2}\Sha(E/K)[2]   \\
    &= \dim_{\Bbb{F}_2}\mathrm{Sel}^2(E/K)-\dim_{\Bbb{F}_2} \frac{E(K)}{2E(K)} \ ( \text{by exact sequence}\ \eqref{basic}) \\
    &\le \dim_{\Bbb{F}_2}\mathrm{Sel}^{\phi}(E/K)+\dim_{\Bbb{F}_2}\mathrm{Sel}^{\hat{\phi}}(E'/K)-\dim_{\Bbb{F}_2} \frac{E(K)}{2E(K)} -\dim_{\Bbb{F}_2}\dfrac{E'(K)[\hat{\phi}]}{\phi(E(K)[2])}\ .
\end{aligned}
\]}

The last inequality follows from \eqref{ine}.\end{proof}

From this point forward, we will limit our discussion to elliptic curves of the form $E: y^2=x^3+px$ where $p$ is a prime number.

\begin{theorem}[\mbox{\rm\textit{cf.}} Genus theory, {\cite[Theorem 8, p.~247]{Sha}}]\label{genus}

Let $K=\Bbb{Q}(\sqrt{D})$ be an imaginary quadratic field and $\mathrm{Cl}_K$ be the ideal class group of $K$. Then $\#{\mathrm{Cl}}_K[2] = 2^{r-1}$ holds where $r$ is the number of prime factors of the discriminant of $K$.

\end{theorem}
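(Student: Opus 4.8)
The plan is to compute $\#\mathrm{Cl}_K[2]$ through class field theory together with the ambiguous class number formula, using the genus field to make the lower bound transparent. First I would record the purely group-theoretic reduction: since $\mathrm{Cl}_K$ is a finite abelian group, multiplication by $2$ has kernel and cokernel of equal order, so $\#\mathrm{Cl}_K[2] = [\mathrm{Cl}_K : \mathrm{Cl}_K^2]$, where $\mathrm{Cl}_K^2$ denotes the subgroup of squares. By class field theory the Artin map identifies $\mathrm{Cl}_K \cong \mathrm{Gal}(H_K/K)$ with the Galois group of the Hilbert class field, whence $\mathrm{Cl}_K/\mathrm{Cl}_K^2 \cong \mathrm{Gal}(M/K)$, where $M$ is the maximal everywhere-unramified abelian extension of $K$ of exponent dividing $2$. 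Thus it suffices to show $[M:K] = 2^{r-1}$.

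For the lower bound I would construct the genus field explicitly. Write the discriminant as a product of prime discriminants $d_K = d_1 \cdots d_r$, where $d_i = (-1)^{(p_i-1)/2} p_i$ for the odd primes $p_i \mid d_K$ and $d_i \in \{-4, \pm 8\}$ for the prime $2$ (if it divides $d_K$); there are exactly $r$ factors. Set $L = \Bbb{Q}(\sqrt{d_1}, \ldots, \sqrt{d_r})$. The classes of the $d_i$ are independent in $\Bbb{Q}^\times/(\Bbb{Q}^\times)^2$, so $[L:\Bbb{Q}] = 2^r$, and since $\sqrt{d_1 \cdots d_r} = \sqrt{d_K}$ generates $K$ we have $K \subseteq L$ with $[L:K] = 2^{r-1}$. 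I would then check that $L/K$ is unramified everywhere: each prime $p_i \mid d_K$ has inertia of order $2$ in $L/\Bbb{Q}$ coming solely from the factor $\Bbb{Q}(\sqrt{d_i})$, and this matches its ramification index in $K/\Bbb{Q}$, so by multiplicativity of ramification in the tower $\Bbb{Q}\subseteq K\subseteq L$ the primes of $K$ above $p_i$ are unramified in $L/K$; primes not dividing $d_K$ are unramified in every factor; and since $K$ is imaginary its archimedean place is complex and imposes no condition. Hence $L \subseteq M$ and $2^{r-1} \le [M:K]$.

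The main obstacle is the matching upper bound $[M:K] \le 2^{r-1}$, for which I would invoke the ambiguous class number formula of Chevalley rather than argue Kummer-theoretically. Because $K$ is imaginary quadratic, $\mathfrak{a}\,\sigma(\mathfrak{a}) = (\mathrm{N}\mathfrak{a})$ is principal for the generator $\sigma$ of $\mathrm{Gal}(K/\Bbb{Q})$, so $\sigma$ acts on $\mathrm{Cl}_K$ by inversion and therefore $\mathrm{Cl}_K^{\langle\sigma\rangle} = \mathrm{Cl}_K[2]$. Chevalley's formula for the cyclic extension $K/\Bbb{Q}$ gives
\[
\#\mathrm{Cl}_K^{\langle\sigma\rangle} = \frac{h_{\Bbb{Q}} \prod_{v} e_v}{[K:\Bbb{Q}]\,[\,\mathcal{O}_{\Bbb{Q}}^\times : \mathcal{O}_{\Bbb{Q}}^\times \cap \mathrm{N}_{K/\Bbb{Q}}K^\times\,]},
\]
where the product runs over all places $v$ of $\Bbb{Q}$ and $e_v$ is the ramification index in $K/\Bbb{Q}$. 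Here $h_{\Bbb{Q}} = 1$, the ramified places are the $r$ finite primes dividing $d_K$ together with the real place (which ramifies precisely because $K$ is imaginary), so $\prod_v e_v = 2^{r+1}$; and since every norm from an imaginary quadratic field is positive, $-1 \notin \mathrm{N}_{K/\Bbb{Q}}K^\times$, forcing the unit index to be $2$. Substituting yields $\#\mathrm{Cl}_K[2] = 2^{r+1}/(2\cdot 2) = 2^{r-1}$, which simultaneously delivers the upper bound and recovers the full count. The delicate points are thus the bookkeeping of the archimedean ramification factor and the unit-norm index, both of which hinge on $K$ being imaginary; this is exactly where genus theory for imaginary fields diverges from the real case.
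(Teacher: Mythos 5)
Your proof is correct, but note that the paper itself offers no proof of this statement at all---it is quoted with a citation to Borevich--Shafarevich---so the relevant comparison is with the classical argument in that reference, which develops genus theory via quadratic forms and genus characters (norm-residue/Hilbert-symbol conditions cutting out the principal genus). Your route is the modern class-field-theoretic one: the identification $\mathrm{Cl}_K/\mathrm{Cl}_K^2 \cong \mathrm{Gal}(M/K)$ through the Hilbert class field, the explicit genus field $L=\Bbb{Q}(\sqrt{d_1},\dots,\sqrt{d_r})$ for the lower bound, and Chevalley's ambiguous class number formula for the exact count, combined with the observation that the nontrivial automorphism $\sigma$ acts on $\mathrm{Cl}_K$ by inversion (because $\mathfrak{a}\,\sigma(\mathfrak{a})=(\mathrm{N}\mathfrak{a})$ is principal), so that ambiguous classes coincide with $\mathrm{Cl}_K[2]$. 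The individual steps all check out: the factorization of $d_K$ into prime discriminants has exactly $r$ factors, their independence in $\Bbb{Q}^\times/(\Bbb{Q}^\times)^2$ gives $[L:K]=2^{r-1}$, the multiplicativity of ramification indices in $\Bbb{Q}\subseteq K\subseteq L$ shows $L/K$ is everywhere unramified (with no archimedean obstruction since $K$ is imaginary), and in Chevalley's formula the bookkeeping $h_{\Bbb{Q}}=1$, $\prod_v e_v = 2^{r+1}$ (the $r$ ramified finite primes together with the real place), $[K:\Bbb{Q}]=2$, and unit-norm index $2$ (norms from an imaginary quadratic field are positive, so $-1$ is not a norm) yields $2^{r-1}$. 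As you observe, the Chevalley computation alone already proves the theorem, so the genus-field construction is logically redundant, though it exhibits the field-theoretic meaning of the count. What your approach buys is machinery that generalizes immediately (ambiguous class number formulas apply to arbitrary cyclic extensions, and the same argument shows where the real quadratic case differs, namely in the archimedean factor and the unit index); what the classical approach in the cited source buys is independence from class field theory.
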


\begin{proposition}\label{condition}

Let $p$ be an odd prime, and let $E :y^2=x^3+px$ be an elliptic curve. 

Let $S\coloneqq S_{E/K}$.
 Suppose that an imaginary quadratic field $K=\Bbb{Q}(\sqrt{D})$ satisfies the following conditions:  
$\lvert D \rvert \neq p$ is a prime number such that $D\equiv 5 \bmod 8$ and $p$ does not split in $K=\Bbb{Q}(\sqrt{D})/\Bbb{Q}$. Then, we have the following. 
\begin{itemize}
\item[(1)]$\#K(S,2)=8$.

\item[(2)] Assume that $\Sha(E/K)$ is finite. Then $\#\Sha(E/K)[2]\le 4$.

\end{itemize}

\end{proposition}

\begin{proof}

(1) Define  a group of $S$-units as ${O_{K,S}}^{\times} \stackrel{\mathrm{def}}{=} \{ a\in K \mid v(a)=0, \forall v\notin S \}$ and define the $S$-ideal class group $\mathrm{Cl}(K,S)$ as the ideal class group of $O_{K,S}$. There is the following exact sequence (\cite{Poonen}, Proposition $12.6$):
    \[1 \to {O_{K,S}}^{\times}/{{O_{K,S}}^{\times}}^2 \to K(S,2) \to \text{Cl}(K,S)[2] \to 1 .\]
    To prove $\#K(S,2)=8$, let us prove $\mathrm{Cl}(K,S)=1$ and $\#O_{K,S}^{\times}/{O_{K,S}^{\times}}^2=8$. Because $\lvert D \rvert$ is a prime, $\mathrm{Cl}_K[2]=1$ by Theorem \ref{genus}. Hence,  $\mathrm{Cl}_K$ is an abelian group of odd order. There is a surjection from $\mathrm{Cl}_K$ to $\mathrm{Cl}(K,S)$. Therefore, the order of $\mathrm{Cl}(K,S)$ is odd. Hence, $\mathrm{Cl}(K,S)[2]=1$.

Let us consider ${O_{K,S}}^{\times}/{O_{K,S}^{\times}}^2$.
From Dirichlet's $S$-unit theorem, ${O_{K,S}}^{\times} \cong \mu(K) \times \Bbb{Z}^{\#S-1}$ where $\mu(K)$ is the group of roots of unity. Since $D \equiv 5 \bmod 8$, $2$ does not split in $K$, and $p$ does not split in $K$ by hypothesis, $\# S = 3$. Since $K$ is an imaginary quadratic field, $\mu(K)/\mu(K)^2 \cong \Bbb{Z}/2\Bbb{Z}$.
From the above, ${O_{K,S}}^{\times}/{O_{K,S}^{\times}}^2 \cong (\Bbb{Z}/2\Bbb{Z})^3$. 
Therefore, $\#K(S,2) = \# {O_{K,S}}^{\times}/{O_{K,S}^{\times}}^2 \times \#\mathrm{Cl}(K,S)[2] = 8$ holds.
$\hfill \square$

(2) Let $\mathrm{dim}_{\Bbb{F}_2} \mathrm{Sel}^{\phi}(E/K)=a$ and $\mathrm{dim}_{\Bbb{F}_2} \mathrm{Sel}^{\hat{\phi}}(E'/K)=b$. By $(1)$, $a,b \le 3$. The $\hat{\phi}$-Selmer group is,\[\mathrm{Sel}^{\hat{\phi}}(E'/K)\cong \{\bar{d}\in K(S,2)\mid C'_d(\Bbb{Q}_p)\neq \emptyset, \forall{p}\in \{2,p,\infty\}\}.\]

When $D\equiv 5 \bmod 8$, $C'_2:2y^2=4+px^4$ does not have a root in $\Bbb{Q}_2(\sqrt{D})=\Bbb{Q}_2(\sqrt{5})$. 
Note that $\Bbb{Q}_2(\sqrt{5})=\Bbb{Q}_2(\zeta_3)$ is an unramified extension of $\Bbb{Q}_2$, thus $2$-adic valuation $v_2$ takes integer valuation. 
If $C'_2$ had a $\Bbb{Q}_2(\sqrt{5})$-rational point $(x,y)$, looking at the $2$-adic valuation $v_2$ of both sides, we obtain $1+2v_2(y)=\min\{2,4v_2(x)\}$. But the left-hand side is odd and the right-hand side is even, which is a contradiction. Also, points at infinity of $C'_2$ are $[1:\pm 4\sqrt{\dfrac{p}{2}}:0]=[1:\pm 4\sqrt{2p}:0]$ by  Remark \ref{infinite}. Because $\sqrt{2p}\notin \Bbb{Q}_2(\sqrt{5})$, there are no $\Bbb{Q}_2(\sqrt{5})$-rational points at infinity. Thus, we obtain that $2 \notin \mathrm{Sel}^{\hat{\phi}}(E'/K)$. Note that $\#E(K)[\hat{\phi}]/\phi(E(K)[2])=2$. Thus, $b\le 2$, we can conclude 
$\mathrm{dim}_{\Bbb{F}_2} \Sha(E/K)[2]\le a+b-1-1 \le 3$ by Proposition \ref{two sequence}. Because $\dim_{\Bbb{F}_2}\Sha(E/K)[2]$ is even when $\Sha(E/K)$ is finite (see [\cite{sil}, Section X, Remark 6.3]), $\mathrm{dim}_{\Bbb{F}_2} \Sha(E/K)[2]\le 2$ holds.\end{proof}

\begin{proposition}\label{mainlemma}
Let $p$ be an odd prime, and let $E :y^2=x^3+px$ be an elliptic curve.

\begin{itemize}
\item[(1)]
When $p\equiv 1\bmod 4$, we take an imaginary quadratic field $K=\Bbb{Q}(\sqrt{D})$ that satisfies the following conditions:

\begin{itemize} [label=\textbullet] 
\item $l\coloneqq -D \neq p$ is a prime number,
\item $D\equiv 1 \bmod 4$,
\item $p$ does not split in $K=\Bbb{Q}(\sqrt{D})/\Bbb{Q}$.
\end{itemize}
Assume that $\Sha(E_D/\Bbb{Q})$ is finite. Then, $\Sha(E_D/\Bbb{Q})[2]=0$.

\item[(2)]
When $p\equiv 3\bmod 4$, we take an imaginary quadratic field $K=\Bbb{Q}(\sqrt{D})$ that satisfies the following conditions:
\begin{itemize} [label=\textbullet] 
\item $l\coloneqq -D \neq p$ is a prime number,
\item $D\equiv 3 \bmod 4$,
\item $p$ splits in $K=\Bbb{Q}(\sqrt{D})/\Bbb{Q}$.
\end{itemize}
Assume that $\Sha(E_D/\Bbb{Q})$ is finite. Then, $\Sha(E_D/\Bbb{Q})[2]=0$.

\end{itemize}

\end{proposition}

\begin{proof}

First, let us establish the common preliminary setup for both cases (1) and (2).
Let $S'\coloneqq S_{E_D/\Bbb{Q}}$. Since $-D$ is a prime number, $S'=\{2,p,-D,\infty\}$ and \[\#\Bbb{Q}(S',2)=\#\{(-1)^{n_1}2^{n_2}p^{n_3}(-D)^{n_4}\mid 0\le n_1,n_2,n_3,n_4\le 1\}=16.\] Let $\phi_D: E_D\to E_D', (x,y)\mapsto (\frac{y^2}{x^2}, \frac{y(pD^2-x^2)}{x^2})$ be a degree $2$ isogeny and $\hat{\phi}_D$ be dual isogeny. 
Let $\mathrm{dim}_{\Bbb{F}_2}\mathrm{Sel}^{\phi_D}(E_D/\Bbb{Q})=e, \mathrm{dim}_{\Bbb{F}_2}\mathrm{Sel}^{\hat{\phi}_D}(E'_D/\Bbb{Q})=f$. Since $\#\Bbb{Q}(S',2)=16, e, f\le 4$.

 Let $T_d: dy^2=d^2-4pD^2x^4$ and $T'_d: dy^2=d^2+pD^2x^4$. 
 
The $\phi_D$-Selmer group and $\hat{\phi}_D$-Selmer group are as follows:
\[
\mathrm{Sel}^{\phi_D}(E_D/\Bbb{Q})\cong \left\{ \bar{d} \in \mathbb{Q}(S',2) \mid T_d(\Bbb{Q}_v)\neq \emptyset, \forall{v}\in \{2,p,l,\infty\}\right\}, 
\]
\[
\mathrm{Sel}^{\hat{\phi}_D}(E'_D/\Bbb{Q}) \cong \left\{ \bar{d} \in \mathbb{Q}(S',2) \mid T'_d(\Bbb{Q}_v)\neq \emptyset, \forall{v}\in \{2,p,l,\infty\}\right\}.
\]
For curves $T_d$ and $T'_d$, we first determine their points at infinity: By Remark \ref{infinite}, the points at infinity  of $T_d$ are $[1:\pm 2D\sqrt{\frac{-p}{d}}:0]$. Similarly, the points at infinity of $T'_d$ are $[1:\pm D\sqrt{\frac{p}{d}}: 0]$. 
   
For both (1) and (2), $\bigg(\frac{p}{l}\bigg)=-1$ holds true.

We prove that $T_D: y^2=D-4pDx^4, T_{pD}: y^2=pD-4Dx^4, T_{2pD}: y^2=2pD-2Dx^4,T_{2D}: y^2=2D-2pDx^4$ do not have $\Bbb{Q}_l$-rational points. For each curve, if there exists a $\mathbb{Q}_l$-rational point $(x,y)$, then the Hilbert symbol $(A,B)_l$ of $A(x^2)^2+By^2=1$ must be $1$. We compute the Hilbert symbols mod $l$ for these quadratic forms:
For $T_D: (\frac{1}{D}, 4p)_l = (p,D)_l$.
For $T_{pD}: (\frac{1}{pD}, \frac{4}{p})_l = (pD,p)_l = (p,D)_l$.
For $T_{2pD}: (\frac{1}{2pD}, \frac{1}{p})_l = (2pD,p)_l = (p,D)_l$.
For $T_{2D}: (\frac{1}{2D}, p)_l = (2D,p)_l = (p,D)_l$.
Since $\left( \dfrac{p}{l} \right) = -1$, we have $(p,D)_l = -1$. Therefore, all the above Hilbert symbols equal $-1$, which proves that none of these curves have $\mathbb{Q}_l$-rational points in the affine part. Moreover, there are no $\Bbb{Q}_l$-rational points at infinity because $\sqrt{pl}, \sqrt{l}, \sqrt{2l}, \sqrt{2pl} \notin \mathbb{Q}_l$.
As a consequence, we conclude that $D, pD, 2pD, 2D \notin \mathrm{Sel}^{\phi_D}(E_D/\mathbb{Q})$.

Note that $-p\in \mathrm{Sel}^{\phi_D}(E_D/\Bbb{Q})$ since the points at infinity are $[1:\pm{2D}:0]$. Therefore, $-pD,-D,-2D,-2pD\notin \mathrm{Sel}^{\phi_D}(E_D/\Bbb{Q})$. Since we have determined that $8$ out of $16$ elements of $\Bbb{Q}(S',2)$ do not belong to $\mathrm{Sel}^{\phi_D}(E_D/\Bbb{Q})$, if we can show that one of the remaining $T_d$ has no $\Bbb{Q}_v$-rational point for some $v \in S'$, then we can prove that $e \le 2$.

Let us evaluate $f$. When $d < 0$, $T'_d(\mathbb{R})=\emptyset$. Thus, $d\notin \mathrm{Sel}^{\hat{\phi}_D}(E_D/\Bbb{Q})$. Affine parts of $T'_2: 2y^2=4+pD^2x^4$, $T'_{2pl}: 2y^2=4pl+lx^4$  and $T'_{2p}: 2y^2=4p+D^2x^4$ do not have $\Bbb{Q}_2$-rational points because $1+2\mathrm{ord}_2(y)=\mathrm{min}\{2,4\mathrm{ord}_2(x)\}$ does not hold. Also, note that $T'_2, T'_{2p}, T'_{2l}, T'_{2pl}$ do not have $\Bbb{Q}_2$-rational points at infinity since none of $\sqrt{2p}, \sqrt{2}$,$\sqrt{2pl}$,$\sqrt{2l}$ belongs to $\Bbb{Q}_2$. Since we have shown that $12$ out of $16$ elements do not belong to $\mathrm{Sel}^{\hat{\phi}_D}(E'_D/\Bbb{Q})$, $f \le 2$,  if we can show that one of the remaining $T_d$ has no $\Bbb{Q}_v$-rational point for some $v \in S'$, then we can prove that $f \le 1$.

\vskip\baselineskip
In what follows, we prove that both $e$ and $f$ can be reduced by $1$ in each of cases (1) and (2).

(1) Let us evaluate $e$. We claim that $T_{-1}: y^2=-1+4pl^2x^4$ does not have $\Bbb{Q}_l$-rational points. Indeed, when $x,y\in \Bbb{Z}_l$, $y^2\equiv -1 \bmod l$ does not hold since $\left( \dfrac{-1}{l}\right)=-1$. When $v_l(x)<0$, $v_l(y)=1+2v_l(x)$. Set $v_l(x)\coloneqq -a$ ($a>0$). We can put $x=l^{-a}x'$, $y=l^{1-2a}y'$ where $x',y'\in {\Bbb{Z}_l}^{\times}$. We obtain $y'^2=-l^{4a-2}+4px'^4$ and $p$ should be a square modulo $l$. This contradicts the fact that $\left( \dfrac{p}{l}\right)=-1$. Thus, $v_l(x)\ge 0$. In this case, $x,y \in \Bbb{Z}_l$ and we have already shown that there are no $\Bbb{Q}_l$-rational points in this case. There are no $\Bbb{Q}_l$-rational points at infinity since $\sqrt{p}\notin \Bbb{Q}_l$. Therefore, $-1\notin \mathrm{Sel}^{\phi_D}(E_D/\Bbb{Q})$. 
Since we have shown that $9$ out of $16$ elements do not belong to $\mathrm{Sel}^{\phi_D}(E_D/\Bbb{Q})$, we can conclude that $e\le 2$.

Let us evaluate $f$. We claim that $T'_l(\Bbb{Q}_p)=\emptyset$ where $T'_l: y^2=l+plx^4$. Indeed, When $p\equiv 1 \bmod 4$, $\left(\frac{1}{l},-p\right)_p=(-1)^{\frac{p+1}{2}}=-1$ and there are no $\Bbb{Q}_p$-rational points at infinity since $\sqrt{-pl}\notin \Bbb{Q}_p$. Thus, $l\notin \mathrm{Sel}^{\hat{\phi}_D}(E'_D/\Bbb{Q})$. Since we have shown that $13$ out of $16$ elements do not belong to $\mathrm{Sel}^{\hat{\phi}_D}(E'_D/\Bbb{Q})$, we can conclude that $f\le 1$.
 Thus, $\mathrm{dim}_{\Bbb{F}_2} \Sha(E_D/\Bbb{Q})[2]\le e+f-1-1 \le 2+1-1-1=1$. Since $\dim_{\Bbb{F}_2}\Sha(E_D/\Bbb{Q})[2]$ is even (see [\cite{sil}, Remark 6.3]), $\mathrm{dim}_{\Bbb{F}_2} \Sha(E_D/\Bbb{Q})[2]=0$ holds. 

(2) Let us evaluate $e$. We claim that $T_{-1}(\Bbb{Q}_p)= \emptyset$ where $T_{-1}: y^2=-1+4pl^2x^4$. Indeed, Hilbert symbol $\left(-1, 4pl^2\right)_p= \left(-1, p\right)_p=(-1)^{\frac{p-1}{2}}=-1$ and there is no $\Bbb{Q}_p$-rational points at infinity because $\sqrt{p}\notin \Bbb{Q}_p$. Since we have shown that $9$ out of $16$ elements do not belong to $\mathrm{Sel}^{\phi_D}(E_D/\Bbb{Q})$, we can conclude that $e\le 2$. 

Let us evaluate $f$. Since $l\equiv 1 \bmod 4$, affine part of $T'_l : y^2=l+plx^4$ does not have $\Bbb{Q}_l$-rational points because $\left(-p,\frac{1}{l}\right)_l=\left(-p, l\right)_l=\bigg( \dfrac{p}{l} \bigg) \bigg( \dfrac{-1}{l} \bigg)=-(-1)^{\frac{l-1}{2}}=-1$. Also, note that $T'_l$ does not have $\Bbb{Q}_l$-rational points at infinity because $\sqrt{pl}$ does not belong to $\Bbb{Q}_l$. Since we have shown that $13$ out of $16$ elements do not belong to $\mathrm{Sel}^{\hat{\phi}_D}(E'_D/\Bbb{Q})$, $f \le 1$.
Thus, $\mathrm{dim}_{\Bbb{F}_2} \Sha(E_D/\Bbb{Q})[2] \le e + f - 1 - 1 \le 2 + 1 - 1 - 1 = 1$. Since $\dim_{\Bbb{F}_2}\Sha(E_D/\Bbb{Q})[2]$ is even, $\mathrm{dim}_{\Bbb{F}_2} \Sha(E_D/\Bbb{Q})[2]=0$ holds.\end{proof}

\begin{proposition}\label{main theorem 3}

Let $p\equiv 1 \bmod 4$ be a prime number, and let $E:y^2=x^3+px$ be an elliptic curve.  
\begin{enumerate}
\item 
There exist infinitely many imaginary quadratic fields 
$K=\Bbb{Q}(\sqrt{D})$ with $-D$ being a prime number such that $\#\Sha(E/K)[2] \le 4$ and $\Sha(E_D/\Bbb{Q})[2]=0$ under the assumption that $\#\Sha(E/K)$ and $\#\Sha(E_D/\Bbb{Q})$ are finite. 
\item 

If $\Sha(E/\Bbb{Q})$ contains an element of order $4$, then for any quadratic number field $K=\Bbb{Q}(\sqrt{D})$, $\Sha(E/K)[2] \neq 0$.
 
\end{enumerate}
\end{proposition}

\begin{proof}
 (1) By Dirichlet's prime number theorem and the Chinese remainder theorem, there exist infinitely many prime numbers $-D$ such that $D\equiv 5 \bmod 8$ and $p$ does not split in $K=\Bbb{Q}(\sqrt{D})/\Bbb{Q}$. By Proposition \ref{condition} and Proposition \ref{mainlemma}, the result follows.

 (2) 
 Let $C\in \Sha(E/\Bbb{Q})$ be an element of order $4$. If $C$ is trivial in $\Sha(E/\Bbb{Q}(\sqrt{D}))$ for some $D$, the order (period), which is $4$, would have to divide index, which is $2$. This is a contradiction. Thus any $K$ cannot trivialize $C$ in $\Sha(E/K)$. This implies $\Sha(E/K)$ has an element of order $2$.\end{proof}

\begin{example}\label{impossible}
Let $p = 257$ be the fourth Fermat prime. Using \textsc{Magma} \cite{Magma}, we compute:\par
\texttt{A := EllipticCurve([0,0,0,257,0]);}\par
\texttt{MordellWeilShaInformation(A: ShaInfo := true);}\par
This computation shows that the Tate--Shafarevich group $\Sha(E/\Bbb{Q})$ of the elliptic curve $E: y^2 = x^3 + 257x$ has an element of order $4$. Thus, by Proposition \ref{main theorem 3} (2), there exists no quadratic field $\Bbb{Q}(\sqrt{D})$ such that $\Sha(E/\Bbb{Q}(\sqrt{D}))[2]=0$.
\end{example}

\begin{remark}
It remains unknown whether there exist finite extensions $L/\Bbb{Q}$ such that \[\Sha(E/L)[2] = 0\].
\end{remark}

\section{Acknowledgement}
 I heartily thank my advisor Nobuo Tsuzuki for his constant encouragement and helpful comments and ideas. I would like to express my sincere gratitude for valuable comments from Professor Kazuo Matsuno. I would like to express my sincere gratitude to Professor Christian Wuthrich, who warmly answered my questions about the Tate-Shafarevich group through online forums and email. I thank Kaoru Sano for valuable discussions during my internship at NTT, and the anonymous referee for a careful reading and helpful comments on the exposition. This work was supported by JSTSPRING, Grant Number JPMJSP2114.

\end{document}